\documentclass[11pt]{article}

\usepackage[a4paper,margin=1.15in]{geometry}
\usepackage[T1]{fontenc}
\usepackage{lmodern}
\usepackage{amsmath,amssymb,amsthm,mathtools,mathrsfs}
\usepackage{enumitem}
\usepackage{microtype}
\usepackage[hidelinks]{hyperref}
\usepackage{cleveref}
\usepackage{tikz}
\usetikzlibrary{arrows.meta,calc,decorations.pathreplacing}
\usepackage{booktabs}

\newtheorem{theorem}{Theorem}[section]
\newtheorem{lemma}[theorem]{Lemma}
\newtheorem{corollary}[theorem]{Corollary}
\newtheorem{proposition}[theorem]{Proposition}

\theoremstyle{remark}
\newtheorem{remark}[theorem]{Remark}
\newtheorem*{bassoquestion}{Basso's Question 25}

\newcommand{\M}{\mathbf M}
\newcommand{\I}{\mathbf I}
\newcommand{\N}{\mathbf N}
\newcommand{\Hh}{\mathcal H}
\newcommand{\set}{\operatorname{set}}
\newcommand{\spt}{\operatorname{spt}}

\newcommand{\diam}{\operatorname{diam}}
\newcommand{\Odd}{\operatorname{Odd}}
\newcommand{\llbracket}{[\![}
\newcommand{\rrbracket}{]\!]}

\title{\bf Odd branching obstructs multiplicity-one integral current structures}

\author{%
Deguang Zhong\textsuperscript{1}\\
\small\textsuperscript{1}Institute of Applied Mathematics, Shenzhen Polytechnic University,\\[-0.15em]
Shenzhen 518055, China\\
Email: \texttt{huachengzhon@163.com}
}

\date{13 September 2026}

\begin{document}

\maketitle

\begin{abstract}
Giuliano Basso asked whether every $n$-dimensional integral current space admits an integral current structure with the same characteristic set and multiplicity one almost everywhere. We give a negative answer already in dimension one. The obstruction is a parity phenomenon at odd-valence branch points. More precisely, if a one-dimensional integral current has unit multiplicity on the arms incident to an isolated vertex $v$, then the coefficient of its boundary at $v$ has the same parity as $\deg(v)$. Hence infinitely many odd-valence vertices force infinite boundary mass for every full-support unit-multiplicity current.

We construct a compact geodesic, doubling, $1$-Ahlfors regular graph-like continuum $G$ of finite length and a boundaryless current $T\in\I_1(G)$ with $\set(T)=G$, while no $S\in\I_1(G)$ with $\set(S)=G$ can have multiplicity one $\Hh^1$-almost everywhere. We classify all full-support integral cycles on $G$. In particular, the least possible essential maximal multiplicity is exactly two, and the minimum mass of such a cycle is $3/2$. For finite metric graphs we identify a complementary sharp defect: the minimum boundary mass among unit-multiplicity currents equals the number of odd-degree vertices. Finally, by taking products with flat tori and using the precise Ambrosio--Kirchheim slice representation, we obtain boundaryless compact geodesic Ahlfors-regular counterexamples in every dimension.
\end{abstract}

\medskip
\noindent
\textbf{Keywords.}
metric currents, integral current spaces, multiplicity, metric graphs, odd branching, Ahlfors regularity, slicing

\medskip
\noindent
\textbf{2020 Mathematics Subject Classification.}
Primary 49Q15; Secondary 28A75, 53C23, 05C45

\section{Introduction}

Metric currents were introduced by Ambrosio and Kirchheim \cite{AK00}, extending the Federer--Fleming theory \cite{FedererFleming60,Federer69,Whitney57} to arbitrary complete metric spaces. Kirchheim's metric rectifiability theory \cite{Kirchheim94} and the general metric-measure background in \cite{AmbrosioTilli04} underlie the rectifiable representation. Sormani and Wenger subsequently introduced integral current spaces as metric spaces equipped with a completely settled integral current structure \cite{SW11}; see also \cite{PortegiesSormani18,Lang11,LangWenger11} for related developments, local currents, and compactness. Compactness and approximation tools for metric integral currents have been developed further by Wenger \cite{Wenger11} and Goldhirsch \cite{Goldhirsch22}. Recent work also uses current structures in filling and Finsler-volume problems; see, for example, \cite{BassoFinsler25, BassoCreutzSoultanis23}.

In a 2025 list of open questions in metric geometry and functional analysis, Basso asked the following \cite{BassoQuestions25}.

\begin{bassoquestion}
Let $X$ be an $n$-dimensional integral current space. Does there exist an integral $n$-current $S$ in the metric completion of $X$ such that
\[
    \set(S)=X
\]
and the multiplicity of $S$ is equal to one $\Hh^n$-almost everywhere?
\end{bassoquestion}

Basso remarks that the question may perhaps already be implicit in the foundational literature on integral current spaces. The January 3, 2025 version of the problem list presents it as a question. As of September 2026, we are not aware of a published or publicly available preprint answering it; accordingly, our priority statement is limited to the literature we could locate. The answer is negative in full generality, for an elementary reason that is invisible on manifolds: odd branching creates an unavoidable boundary defect for unit multiplicity.

The possibility of multiplicity one is important in more regular geometric settings. For example, Jaramillo, Perales, Rajan, Searle and Siffert construct a weight-one, boundaryless integral current structure on every closed orientable Alexandrov space \cite{JPRSS21}, building on the interaction between Alexandrov geometry, orientability and current homology; compare also Mitsuishi's identification of integral-current and singular homology on locally Lipschitz contractible spaces \cite{Mitsuishi19}. Matveev and Portegies proved multiplicity-one and constancy conclusions for noncollapsing Ricci-limit spaces under uniform geometric hypotheses \cite{MatveevPortegies17}; Honda's work on orientability of Ricci-limit spaces gives further current-theoretic context \cite{Honda17}. For metric manifolds, Basso, Marti and Wenger constructed an essentially unique top-dimensional current under weak quantitative topological hypotheses and developed the metric fundamental-class viewpoint \cite{BassoMartiWenger25}; Marti and Soultanis subsequently characterized the existence of such fundamental classes under finite Nagata dimension hypotheses \cite{MartiSoultanis24}. Recent extensions to non-orientable manifolds and manifolds with boundary, and further work on current homology, are due to Marti \cite{Marti26Fundamental,Marti26Homology}. Thus multiplicity one is a genuine structural conclusion under additional assumptions, rather than a formal consequence of the definition of an integral current space. This contrast suggests a more structured version of Basso's question: which local geometric or topological hypotheses guarantee a full-support unit-multiplicity current? The examples below show that any such criterion must, at a minimum, exclude persistent codimension-one odd branching.

Our first result isolates the local obstruction. A precise version is proved in \cref{sec:parity}.

\begin{theorem}\label{thm:parity-intro}
Let $G$ be a metric graph-like space, let $v\in G$ be an isolated vertex of finite degree $d$, and let $S\in\I_1(G)$. Suppose that, on each open edge segment incident to $v$, the multiplicity of $S$ is one almost everywhere. If
\[
    \partial S=\sum_{j=1}^N m_j\llbracket x_j\rrbracket,
\]
then the coefficient $m_v$ of $\partial S$ at $v$ satisfies
\[
    m_v\equiv d\pmod 2.
\]
In particular, if $d$ is odd, then $m_v\neq 0$.
\end{theorem}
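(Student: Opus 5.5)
The plan is to localize at $v$ and compute the boundary coefficient there by pushing $S$ forward to the real line with a distance-type Lipschitz function. Since $v$ is isolated, fix $r>0$ so small that $B(v,r)$ contains no other vertex and no boundary atom $x_j\neq v$. Then $B(v,r)$ is the union of $d$ arms $e_1,\dots,e_d$, each isometric to $[0,r)$ with $0\leftrightarrow v$. By the hypothesis on multiplicities, and the rectifiable representation of $S$ restricted to each open arm $e_i\setminus\{v\}$, this restriction is
\[
S\llcorner (e_i\setminus\{v\}) = \sigma_i\,\llbracket \gamma_i\rrbracket.
\]
Here $\gamma_i:(0,r)\to e_i$ is the unit-speed parametrization pointing away from $v$, and $\theta_i=\sigma_i\in\{\pm1\}$ is the orientation times the multiplicity. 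A priori $\sigma_i$ could vary with $t$, so the first step is to show it is constant on each arm. For this, note that $\partial S$ has no mass in $e_i\setminus\{v\}$. Pushing forward by the isometry $\gamma_i^{-1}$ to $(0,r)$ gives a one-dimensional integral current $\sigma(t)\llbracket(0,r)\rrbracket$ with zero boundary in $(0,r)$. By the constancy theorem on an interval, $\sigma$ is then a.e.\ constant. We may also note that $\{v\}$ carries no $\Hh^1$-mass, so $S\llcorner B(v,r)=\sum_i\sigma_i\llbracket\gamma_i\rrbracket$.

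The second step computes $m_v$. Let $\rho=d(v,\cdot)$, which is $1$-Lipschitz, and take $\chi$ a smooth cutoff equal to $1$ near $0$ and supported in $[0,r)$. Since $\partial S\llcorner B(v,r)=m_v\llbracket v\rrbracket$, we have
\[
m_v=\partial S(\chi\circ\rho)=S(1,\chi\circ\rho).
\]
On arm $i$ this evaluates to
\[
\sigma_i\int_0^r\chi'(t)\,dt=-\sigma_i.
\]
Summing over the arms gives $m_v=-\sum_{i=1}^d\sigma_i$. A sum of $d$ terms each equal to $\pm1$ has the parity of $d$, hence $m_v\equiv d\pmod 2$. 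If $d$ is odd, this forces $m_v\ne0$.

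The main obstacle is making the localization rigorous in the Ambrosio--Kirchheim framework. One must justify that $S(1,\chi\circ\rho)$ splits as a sum over the arms. This holds because $\|S\|$ is concentrated on $\bigcup_i e_i$ up to $\Hh^1$-null sets, together with the rectifiable representation
\[
S(f,\pi)=\int f\,\theta\,\langle d\pi,\tau\rangle\,d\Hh^1.
\]
One must also check that the cutoff test equals the boundary coefficient, which follows since $\partial S$ is a finite sum of atoms and none lie in $\spt(\chi\circ\rho)$ except $v$. A further point is the constancy step: it requires $\partial S$ to vanish as a current on the open arm, which follows from the atomic form of $\partial S$ and the choice of $r$. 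The constancy theorem for currents in $\mathbb R$ (via the push-forward) then finishes it.
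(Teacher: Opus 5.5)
Your proposal is correct and follows essentially the same route as the paper's proof. Both arguments localize to a star of $d$ arms at $v$ and use interval constancy (with $\partial S$ vanishing on each open arm) to get a constant sign $\sigma_i\in\{\pm1\}$ per arm. Both then test $\partial S$ against a Lipschitz cutoff equal to $1$ at $v$, which gives $m_v=-\sum_i\sigma_i$, and conclude by reducing mod $2$. The only cosmetic difference is your specific cutoff $\chi\circ d(v,\cdot)$. This choice uses that $d(v,\gamma_i(t))=t$ on each arm for small $r$, which holds because the arms are geodesic from $v$ inside the star.
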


The theorem is essentially a metric-current form of the elementary parity principle behind Euler trails. Its usefulness here comes from the fact that a zero-dimensional integral current of finite mass has only finitely many nonzero integer atoms. Thus infinitely many isolated odd-valence vertices are incompatible with a full-support unit-multiplicity integral $1$-current.

We then build a compact finite-length graph-like continuum containing infinitely many trivalent vertices but still carrying a boundaryless integral current if multiplicity two is allowed on selected edges.

\begin{theorem}[Main counterexample]\label{thm:main-intro}
There exist a compact metric space $G$ and a current $T\in\I_1(G)$ such that:
\begin{enumerate}[label=\textup{(\roman*)}]
\item $G$ is geodesic, doubling, and $1$-Ahlfors regular;
\item $\partial T=0$ and $\set(T)=G$;
\item there is no $S\in\I_1(G)$ satisfying $\set(S)=G$ and having multiplicity one $\Hh^1$-almost everywhere.
\end{enumerate}
Consequently, Basso's Question~25 has a negative answer already in dimension one.
\end{theorem}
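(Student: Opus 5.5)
We will build $G$ explicitly as a ``comb'' of finite total length and then apply \cref{thm:parity-intro}. Take a base circle $C$ of length $1$ and points $p_k\in C$ converging to a point $p_\infty$. At each $p_k$ attach a small loop (a circle $L_k$ of length $\ell_k=2^{-k}$), glued along an arc $A_k$ of length $\ell_k/2$. More convenient: attach at $p_k$ a segment $I_k$ of length $\ell_k$ and close it up by a second segment back to a nearby point $q_k\in C$, with the arcs $[p_k,q_k]\subset C$ pairwise disjoint. Each pair $p_k,q_k$ is then a trivalent vertex, and we have a ``theta'' configuration at each $k$. Give $G$ the length (path) metric. Since $\sum \ell_k<\infty$ and the attached pieces shrink to $p_\infty$, $G$ is compact, has finite $\Hh^1$-measure, and is geodesic by Hopf--Rinow for complete locally compact length spaces. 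For $1$-Ahlfors regularity and doubling we choose the geometry so that each attached loop at scale $\ell_k$ sits at distance $\gtrsim \ell_k$ from the others (e.g.\ $p_k$ at distance $\sim 2^{-k}$ from $p_\infty$). Then a ball $B(x,r)$ contains a geodesic segment of length $\gtrsim r$, giving the lower bound, and it meets only loops with $\ell_k\lesssim r$ plus $O(1)$ larger pieces, giving $\Hh^1(B(x,r))\lesssim r$ via a geometric series. Doubling follows from Ahlfors regularity.

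For (ii) we define $T$ as the sum of the fundamental cycle of $C$ (multiplicity one, oriented) and, for each $k$, the cycle formed by the attached path from $p_k$ to $q_k$ together with the arc $[q_k,p_k]$ of $C$ traversed in the direction opposite to $C$'s orientation. Each summand is a Lipschitz loop, so it is an integral cycle. The total mass is $1+\sum 2\ell_k'<\infty$, so the series converges in mass and the limit lies in $\I_1(G)$ with $\partial T=0$ by closedness of boundary under mass convergence. On $[p_k,q_k]$ the multiplicities cancel, giving $0$, which is bad for $\set(T)=G$. So instead we orient the extra loop in the same direction along the arc, so that the multiplicity there is $2$. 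Then the multiplicity is $1$ on $C$ away from the arcs, $2$ on the arcs, and $1$ on the attached paths. Hence the density is positive everywhere and $\set(T)=G$. The closure and the accumulation point $p_\infty$ are handled because $\set$ is the set of positive lower density, and $p_\infty$ is a limit of points of $C$ with density $\ge1$ along $C$.

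For (iii), suppose $S\in\I_1(G)$ has $\set(S)=G$ and multiplicity one a.e. Every vertex $p_k,q_k$ is isolated of degree $3$, and every incident open edge lies in $\set(S)$ with multiplicity one. By \cref{thm:parity-intro}, $\partial S$ has an odd, hence nonzero, integer coefficient at each $p_k$. This gives infinitely many atoms of absolute weight $\ge1$, so $\M(\partial S)=\infty$, contradicting $S\in\I_1(G)$. The main obstacle is the precise setup needed to invoke \cref{thm:parity-intro}: the localization of $\partial S$ to an atomic measure near an isolated vertex, which is the content of the precise version in \cref{sec:parity}. A second obstacle is the Ahlfors-regularity bookkeeping near $p_\infty$, where we must check both the upper and the lower bounds uniformly for balls centered on tiny loops.
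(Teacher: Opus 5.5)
Your proof is correct and follows the paper's approach. Your circle with doubled arcs $[p_k,q_k]$ is, up to metric parameters, the paper's closed chain of diamonds $u_k,v_k$. Your corrected cycle (coefficient $2$ on each arc, $-1$ on each attached path, $1$ on the connectors) is exactly the paper's mass-minimizing $R_*$ rather than its symmetric $T$. Part (iii) is the same parity-plus-finitely-many-atoms argument.

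Two small improvements. Positive density at $p_\infty$ is better justified by $\|T\|\ge\Hh^1\llcorner G$ together with your Ahlfors lower bound: being a limit of points of positive density does not by itself give positive density. If you also give each attached path the same length as its arc, the paper's $1$-Lipschitz, at-most-two-to-one collapse onto a circle gives the Ahlfors upper bound $4r$ without your scale-separation bookkeeping.
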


The same example admits a sharp analysis of all of its full-support cycles. If
\[
 \mathscr Z(G)=\{R\in\I_1(G):\partial R=0,\ \set(R)=G\},
\]
then the classification in \cref{thm:cycle-classification} gives
\begin{equation}\label{eq:intro-sharp}
 \min_{R\in\mathscr Z(G)}
 \operatorname*{ess\,sup}_{G}\theta_R=2,
 \qquad
 \min_{R\in\mathscr Z(G)}\M(R)=\frac32.
\end{equation}
Thus the obstruction is quantitatively sharp: multiplicity one is impossible, but multiplicity two suffices.

For finite graphs the same parity argument yields a sharp formula rather than a nonexistence statement. If $K$ is a finite metric graph, let $\mathscr U(K)$ be the class of integral $1$-currents that have absolute multiplicity one on the interior of every edge. Then
\[
    \inf_{S\in\mathscr U(K)}\M(\partial S)=\#\{v\in V(K):\deg(v)\text{ is odd}\}.
\]
The upper bound is the classical balanced-orientation construction obtained from Euler's theorem; see, for example, \cite{Diestel17}. Thus the obstruction is not branching by itself, but an \emph{infinite nonsummable odd-branching defect}.

Finally, we pass to all dimensions by taking products with flat tori. The product construction is obtained from Wenger's interval product \cite[Section 3.1]{Wenger07} by identifying opposite faces. The resulting currents are cycles, and the nonexistence assertion follows by combining a regular finite-to-one torus coordinate map with the slicing theorems of Ambrosio--Kirchheim \cite[Theorems 5.6--5.7 and 9.7]{AK00}.

\begin{theorem}[Counterexamples in every dimension]\label{thm:all-dim-intro}
For every integer $n\ge 1$ there exist a compact, geodesic, doubling, $n$-Ahlfors regular metric space $X_n$ and a current $T_n\in\I_n(X_n)$ such that
\[
    \partial T_n=0,
    \qquad
    \set(T_n)=X_n,
\]
but no $S\in\I_n(X_n)$ satisfies both $\set(S)=X_n$ and $\theta_S=1$ $\Hh^n$-almost everywhere.
\end{theorem}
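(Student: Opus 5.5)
The plan is to take $X_n=G\times\mathbb T^{n-1}$, where $G$ and $T$ come from \cref{thm:main-intro} and $\mathbb T^{n-1}=\mathbb R^{n-1}/\mathbb Z^{n-1}$ is the flat unit torus, with the $\ell^2$ product metric. For $n=1$ there is nothing to do. The metric properties are routine. A product of compact geodesic spaces is compact and geodesic. A product of doubling spaces is doubling. Since $G$ is $1$-Ahlfors regular and $\mathbb T^{n-1}$ is $(n-1)$-Ahlfors regular, the product is $n$-regular with respect to $\Hh^1\otimes\Hh^{n-1}$, which is comparable to $\Hh^n$ on $X_n$ by the standard product inequalities for rectifiable sets.

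For existence, set $T_n=T\times\llbracket\mathbb T^{n-1}\rrbracket$. I would build this from Wenger's product with the cube $[0,1]^{n-1}$ \cite[Section 3.1]{Wenger07} and push it forward under the quotient map to the torus. By the product boundary formula,
\[
\partial(T\times\llbracket[0,1]^{n-1}\rrbracket)=\partial T\times\llbracket[0,1]^{n-1}\rrbracket\pm T\times\partial\llbracket[0,1]^{n-1}\rrbracket .
\]
The first term vanishes because $\partial T=0$. In the second, opposite faces carry opposite orientations and are identified by the quotient, so they cancel. Hence $\partial T_n=0$. Its density is $\theta_T\otimes 1$, which is positive exactly on $\set(T)\times\mathbb T^{n-1}=X_n$, so $\set(T_n)=X_n$.

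For nonexistence, suppose $S\in\I_n(X_n)$ has $\set(S)=X_n$ and $\theta_S=1$ $\Hh^n$-a.e. Let $\pi\colon X_n\to\mathbb T^{n-1}$ be the projection and lift it locally to coordinates $\pi=(\pi_1,\dots,\pi_{n-1})$, which are $1$-Lipschitz. By the Ambrosio--Kirchheim slicing theorems \cite[Theorems 5.6--5.7]{AK00}, for a.e. $y\in\mathbb T^{n-1}$ the slice $\langle S,\pi,y\rangle$ lies in $\I_1(\pi^{-1}(y))=\I_1(G\times\{y\})$. Using the rectifiable slice representation \cite[Theorem 9.7]{AK00}, its multiplicity at $(x,y)$ equals $\theta_S(x,y)$ up to sign whenever the approximate tangent of $S$ projects isomorphically under $d\pi$. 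On the product this holds $\Hh^n$-a.e., because $X_n$ is a.e. locally an edge times a torus patch, so the coarea Jacobian of $\pi$ equals $1$. Coarea together with Fubini then gives, for a.e. $y$, that the slice has multiplicity one $\Hh^1$-a.e. on $G\times\{y\}$. Moreover, its set is all of $G\times\{y\}$, since $\set(S)=X_n$ has full $\Hh^n$-measure and, by Fubini, a.e. fiber is covered a.e.

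Identifying $G\times\{y\}$ with $G$ isometrically produces a current in $\I_1(G)$ with full set and unit multiplicity. This contradicts \cref{thm:main-intro}(iii). That part does not even need the slice to be boundaryless. If I prefer to go through \cref{thm:parity-intro}, I can instead note that the slice boundary has finite mass, so it can contain only finitely many atoms. This contradicts the infinitely many odd vertices, each of which must carry an odd boundary coefficient.

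I expect the main obstacle to be the precise identification of the slice multiplicity with $\theta_S$, including the claim that the slice set is a.e. the whole fiber. This needs the orientation and Jacobian bookkeeping in \cite[Theorem 9.7]{AK00} together with the fact that $d\pi$ restricted to the approximate tangent spaces of $X_n$ has full rank a.e. I would first establish this on each product chart (edge)$\times$(torus patch), which is bi-Lipschitz to a Euclidean box, and then take a countable union of such charts. The vertex set times the torus is $\Hh^n$-null, so it can be ignored.
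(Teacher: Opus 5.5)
Your choice of $X_n$ and $T_n$ matches the paper, and so does your nonexistence strategy: slice $S$, use the Ambrosio--Kirchheim precise slice representation plus coarea to get unit multiplicity on almost every fiber, then invoke the one-dimensional obstruction. The slicing step, however, has a gap as written.

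Ambrosio--Kirchheim slicing needs a Lipschitz map defined on all of $X_n$ with values in $\mathbb R^m$. The torus projection is circle-valued, and a local lift of its coordinates is not such a map. Once you extend a local lift to all of $X_n$ (e.g.\ by McShane), its fibers need not be $G\times\{y\}$. No choice can make every fiber a single copy of $G$: such a map would factor through a continuous injection $\mathbb T^m\to\mathbb R^m$, which invariance of domain rules out. So the identification $\I_1(\pi^{-1}(y))=\I_1(G\times\{y\})$ is unjustified. You have therefore not produced what the contradiction actually uses: an \emph{integral} current living on one copy of $G$, whose finite boundary mass forces finitely many boundary atoms. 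Simply restricting $S$ or its slice to an open set $G\times U$ does not repair this, because restriction to a set can destroy finiteness of boundary mass.

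The missing ingredient is a localization that preserves integrality. There are two ways to supply it.
\begin{itemize}
\item The paper slices by the global map $\Pi(x,z)=(\cos2\pi z_1,\dots,\cos2\pi z_m)$. Its fibers over $(-1,1)^m$ are $2^m$ positively separated copies of $G$, and the map has full rank there. It then splits $R_y=\langle S,\Pi,y\rangle$ as $\sum_z R_y\llcorner\eta_z$, with Lipschitz cutoffs $\eta_z$ that are locally constant near $\spt R_y$. Hence $\partial(R_y\llcorner\eta_z)=(\partial R_y)\llcorner\eta_z$, and each piece is integral.
\item Alternatively, first replace $S$ by $S\llcorner\{f<r\}$ with $f(x,z)=d_{\mathbb T^m}(z,z_0)$, for almost every small $r$. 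This current is integral by the identity $\partial(S\llcorner\{f<r\})=\langle S,f,r\rangle+(\partial S)\llcorner\{f<r\}$, since the slice has finite mass for a.e.\ $r$. Then lift the coordinates on the closed slab $\{f\le r\}$, extend, and slice. The slice's support lies in $\{f\le r\}\cap\Pi^{-1}(y)$, which is a single $G\times\{z\}$.
\end{itemize}
Either fix completes your argument.

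A smaller point concerns $\set(T_n)=X_n$. This is a pointwise lower-density condition, not positivity of $\theta_T\otimes 1$. You should argue, as the paper does, from $\|T_n\|\ge a_n\mu_n$ together with the Ahlfors lower bound. The same applies to your claim that the slice has full set. That claim follows from unit multiplicity a.e.\ on the fiber (so its mass measure is $\Hh^1$ there) together with Ahlfors regularity of $G$, or it can be bypassed via your parity route.
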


The proofs use only standard facts from the theory of metric currents: the representation of zero-dimensional integral currents, one-dimensional constancy on intervals, closure and slicing, and elementary graph orientation. The relation between the parity argument and one-dimensional current decompositions is recorded in Remark \ref{rem:decomposition}.

\section{Preliminaries on metric currents and metric graphs}\label{sec:prelim}

We recall only the facts used below. Our conventions follow Ambrosio--Kirchheim \cite{AK00}. For a modern exposition of local currents and slicing, see Lang \cite{Lang11}.

\subsection{Integral currents and characteristic sets}

Let $Z$ be a complete metric space. We write $\I_k(Z)$ for the group of $k$-dimensional integral currents in $Z$, $\M(T)$ for the mass of a current, $\|T\|$ for its mass measure, and $\partial T$ for its boundary.

If $T\in\I_k(Z)$, its canonical or characteristic set is
\[
 \set(T)
 =\left\{x\in Z:
 \Theta_{*k}(\|T\|,x)>0
 \right\},
\]
where
\[
 \Theta_{*k}(\mu,x)
 =\liminf_{r\downarrow0}
   \frac{\mu(B(x,r))}{\omega_k r^k}.
\]
An integral current space in the sense of Sormani--Wenger is a triple $(X,d,T)$ with $T\in\I_n(\overline X)$ and $\set(T)=X$ \cite{SW11}.

For an integer rectifiable current $T$, we write $\theta_T$ for the intrinsic nonnegative integer multiplicity in the Ambrosio--Kirchheim rectifiable representation. The representation and its independence of the ambient isometric embedding are given in \cite[Theorems 9.5--9.6]{AK00}. Thus the statement ``$\theta_T=1$ $\Hh^n$-almost everywhere on $X$'' is intrinsic whenever $\set(T)=X$.

We will also use the standard identification of currents concentrated on a closed isometric subspace. If $Y\subset Z$ is closed and $R\in\I_k(Z)$ has $\|R\|(Z\setminus Y)=0$, then $R=i_\#R_Y$ for a unique $R_Y\in\I_k(Y)$, where $i:Y\hookrightarrow Z$ is inclusion. This follows by extending Lipschitz test functions from $Y$ to $Z$ and using locality; compare the push-forward and restriction formalism in \cite{AK00,Lang11}.

We repeatedly use the elementary form of zero-dimensional integral currents.

\begin{lemma}\label{lem:I0}
If $R\in\I_0(Z)$, then
\[
   R=\sum_{j=1}^{N} m_j\llbracket x_j\rrbracket
\]
for finitely many points $x_j\in Z$ and nonzero integers $m_j$, and
\[
   \M(R)=\sum_{j=1}^{N}|m_j|.
\]
\end{lemma}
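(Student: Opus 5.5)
The plan is to reduce $R$ to a finite sum of Dirac masses using only two facts: the boundary-rectifiability theorem and the finiteness of the masses involved.

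First, by definition $R\in\I_0(Z)$ means that $R$ is an integer rectifiable $0$-current with finite mass and with $\partial R$ of finite mass. In dimension zero the boundary condition is vacuous, so everything rests on the Ambrosio--Kirchheim rectifiable representation \cite[Theorems 4.5 and 9.1]{AK00}. A countably $\Hh^0$-rectifiable set is just a countable set $\{x_j\}_{j\ge1}$. The representation therefore gives
\[
R(f)=\sum_{j}\theta_j\,f(x_j)
\]
for all bounded Lipschitz $f$, with integer weights $\theta_j\neq 0$ (orientation in dimension zero is a sign, which we absorb into $\theta_j$). It also gives
\[
\|R\|=\sum_j|\theta_j|\,\delta_{x_j}.
\]

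Second, I would check the mass identity and then use finiteness. The inequality $\M(R)\le\sum_j|\theta_j|$ is immediate from the representation. For the reverse inequality I would test $R$ against functions of the form $f=\sum_{j\le J}\operatorname{sgn}(\theta_j)\phi_j$, where the $\phi_j$ are Lipschitz bumps with disjoint small supports around distinct points $x_1,\dots,x_J$. This gives $\sum_{j\le J}|\theta_j|\le\M(R)+\varepsilon$ for every $J$ and every $\varepsilon>0$. Alternatively, one can read the identity directly off $\|R\|$ from \cite[Theorem 9.5]{AK00}.

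Since $\M(R)<\infty$ and each $|\theta_j|\ge1$, only finitely many $\theta_j$ are nonzero. That yields the finite sum with $N\le\M(R)$, and the stated mass formula.

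The only delicate point is invoking the correct form of the zero-dimensional representation and the exact mass identity in a general complete metric space. I would cite \cite[Theorems 4.5 and 9.5]{AK00} for this rather than reprove it. Everything else is bookkeeping.
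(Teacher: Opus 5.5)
Your proposal is correct and follows essentially the same route as the paper. Both arguments invoke the Ambrosio--Kirchheim representation of an integer rectifiable $0$-current as a countable sum $\sum_j\theta_j\llbracket x_j\rrbracket$ over distinct points, with nonzero integer weights and $\sum_j|\theta_j|=\M(R)<\infty$. Both then conclude finiteness from $|\theta_j|\ge1$.

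Your bump-function check of the mass identity only makes explicit what the paper reads off the representation. For it to work, the bump supports must be small enough to miss all remaining atoms except a tail of total weight less than $\varepsilon$. The ``boundary-rectifiability theorem'' you mention at the outset is never actually used in dimension zero.
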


\begin{proof}
Ambrosio--Kirchheim's description of integer rectifiable $0$-currents gives a countable atomic representation with integer coefficients and summable absolute values; see \cite[Section 4]{AK00} or \cite[Section 2]{Wenger07}. Since every nonzero integer coefficient has absolute value at least one, finite mass forces the support to be finite.
\end{proof}

\begin{remark}\label{rem:integer-atoms}
The finiteness conclusion in Lemma \ref{lem:I0} is specific to integral currents. A finite signed measure may have infinitely many atoms with summable masses, whereas every nonzero atom of an integral $0$-current has mass at least one. This observation is the global finiteness mechanism used in Corollary \ref{cor:infinite-odd} and Theorem \ref{thm:negative1}.
\end{remark}

We will also use the following one-dimensional constancy fact. A detailed proof is included because it is the only local current-theoretic input in the parity argument.

\begin{lemma}\label{lem:interval}
Let $I=(a,b)\subset\mathbb R$ and let $R\in\I_1(I)$. If $\partial R=0$ in $I$, then
\[
      R=m\llbracket I\rrbracket
\]
for some $m\in\mathbb Z$. If, in addition, the multiplicity of $R$ has absolute value one almost everywhere, then $m\in\{-1,1\}$.
\end{lemma}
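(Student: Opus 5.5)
The plan is to reduce to the classical fact that a distribution on an interval with vanishing derivative is constant, and then to use integrality. I will regard $R$ as a metric current on $I$ with the Euclidean metric. By the Ambrosio--Kirchheim rectifiable representation in dimension one, $R$ acts on pairs $(f,\pi)$ of bounded Lipschitz functions by
\[
R(f\,d\pi)=\int_I f\,\pi'\,\theta\,dt
\]
for some $\theta\in L^1(I;\mathbb Z)$. This uses the fact that a $1$-rectifiable subset of $\mathbb R$ carries the standard orientation up to sign, which is absorbed into a signed integer density $\theta$. To justify the formula, I first check it for smooth $\pi$ via the chain rule for metric currents, and then extend to Lipschitz $\pi$ by the continuity axiom, i.e.\ pointwise convergence of $\pi_k$ with uniformly bounded Lipschitz constants.

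Next I will translate the condition $\partial R=0$. By definition, $\partial R(f)=R(1\,df)=\int_I f'\,\theta\,dt$. Taking $f\in C_c^\infty(I)$, which are admissible Lipschitz test functions, we get $\int_I f'\theta\,dt=0$ for all such $f$. Thus the distributional derivative of $\theta$ vanishes on the connected interval $I$, and so $\theta=c$ almost everywhere for some constant $c$. This is the standard du Bois-Reymond lemma, proved by mollifying $\theta$ or by writing an arbitrary test function with integral zero as a derivative. Since $\theta$ is integer valued, $c=m\in\mathbb Z$.

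Then $R(f\,d\pi)=m\int_I f\pi'\,dt=m\llbracket I\rrbracket(f\,d\pi)$, so $R=m\llbracket I\rrbracket$. For the second assertion, the multiplicity of $R$ equals $|\theta|=|m|$ almost everywhere on $I$ (if $m\neq0$). If it equals one almost everywhere, then $|m|=1$, that is, $m\in\{-1,1\}$.

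The main obstacle is the first step: the precise identification of a metric integral $1$-current on $I$ with integration against an $L^1$ integer density, including the passage from smooth to Lipschitz $\pi$. I would cite \cite[Theorems 4.5 and 9.5]{AK00} for the representation and the chain rule. One technical point remains: $\partial R=0$ is tested with bounded Lipschitz $f$ on the open interval $I$, not only compactly supported ones. This only strengthens the hypothesis, so restricting to $C_c^\infty$ functions is legitimate. Finite mass guarantees $\theta\in L^1$, so the constant $m$ is compatible with $I$ being bounded.
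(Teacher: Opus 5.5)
Your proposal is correct and takes essentially the same route as the paper: represent $R$ by an integer-valued $L^1$ density and read $\partial R=0$ as the vanishing of its distributional derivative, which forces the density to be a constant integer $m$. One small caution: ``$\partial R=0$ in $I$'' has to be read locally, with test functions compactly supported in $I$. Under your global reading even $m\llbracket I\rrbracket$ would fail, since testing with $f(t)=t$ gives $m(b-a)\neq0$. Your argument only ever uses $f\in C_c^\infty(I)$, so it is unaffected.
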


\begin{proof}
A top-dimensional integer rectifiable current in an interval is represented by an integer-valued $L^1$ function $u$, namely $R=\llbracket u\rrbracket$. Normality says that the distributional derivative $Du$ is a finite measure, and $\partial R=0$ in $I$ says $Du=0$ there. Hence $u$ is almost everywhere equal to a constant integer. The last assertion follows from $|u|=1$ almost everywhere. This is the one-dimensional instance of the classical constancy principle; compare \cite{Federer69,AK00,Smirnov94,PaoliniStepanov13,BDP22}.
\end{proof}

\subsection{Metric graphs}

By a \emph{metric graph-like space} in this paper we mean the metric completion of a finite or countable one-dimensional length complex obtained by gluing compact intervals along endpoints, with pairwise disjoint edge interiors, whenever the completion is the space under discussion. This convention deliberately includes the completed shrinking graph constructed in \cref{sec:graph}; it does not assert local finiteness at accumulation points. At every isolated finite-degree vertex used below there is, by assumption, an ordinary finite star neighborhood. An oriented edge $e$ of length $\ell(e)$ determines an integral current $\llbracket e\rrbracket$ by pushing forward the standard current on $[0,\ell(e)]$ under the arclength parametrization. Then
\[
  \M(\llbracket e\rrbracket)=\ell(e),
  \qquad
  \partial\llbracket e\rrbracket
  =\llbracket e_+\rrbracket-\llbracket e_-\rrbracket.
\]

For graph-like spaces, finite-length graph-like continua, cycle decompositions, and Eulerian theory, see \cite{ThomassenVella08,Richter11,Georgakopoulos12,Georgakopoulos14,EspinozaGartsidePitz20,GartsidePitz23}. We use only the elementary metric-graph structure of the explicit example constructed below.

\section{The odd-valence parity obstruction}\label{sec:parity}

We now prove the local parity theorem. The key point is that integrality turns parity into a lower bound for boundary mass.

\begin{theorem}\label{thm:local-parity}
Let $G$ be a metric graph-like space, let $v$ be an isolated vertex of finite degree $d$, and let $S\in\I_1(G)$. Assume that there exists a star neighborhood $U$ of $v$ such that on every connected component of $U\setminus\{v\}$ the multiplicity of $S$ has absolute value one almost everywhere. Write
\[
   \partial S=\sum_{j=1}^{N}m_j\llbracket x_j\rrbracket
\]
as in \cref{lem:I0}, and let $m_v=0$ if $v\notin\{x_1,\dots,x_N\}$. Then
\[
      m_v\equiv d\pmod 2.
\]
Consequently, odd degree implies $|m_v|\ge1$.
\end{theorem}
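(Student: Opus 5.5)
Localize to the star neighborhood and reduce to counting endpoints. Fix a star neighborhood $U$ of $v$; it is $v$ together with $d$ open arms $A_1,\dots,A_d$, each isometric to an interval $(0,r_i)$ with $0$ corresponding to $v$. Shrinking $U$ if necessary (the atoms $x_j$ of $\partial S$ form a finite set by \cref{lem:I0}), we may assume $U$ contains no atom of $\partial S$ other than possibly $v$. Then $\partial S$ vanishes on each open arm $A_i$. Restricting $S$ to $A_i$ and pulling back by the arclength chart gives a current $R_i\in\I_1((0,r_i))$ with $\partial R_i=0$ in the open interval. By \cref{lem:interval} and the unit-multiplicity hypothesis, $S\llcorner A_i=\epsilon_i\llbracket A_i\rrbracket$ with $\epsilon_i\in\{-1,1\}$, where $\llbracket A_i\rrbracket$ is oriented away from $v$.

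Next, cut off at a smaller radius to obtain a finite sum of edge currents. Choose $0<\rho<\min_i r_i$ and let $B=\{x: d(x,v)<\rho\}\cap U$, which is the closed-star piece $v\cup\bigcup_i A_i\cap\{t<\rho\}$. Take a Lipschitz cutoff $\varphi$ equal to $1$ on $B_{\rho/2}(v)$, supported in $B$, and depending only on the distance to $v$. We then compare the actions of $\partial S$ and of $\sum_i\epsilon_i\partial\llbracket A_i^\rho\rrbracket$ on test functions $f\varphi$ supported near $v$, where $A_i^\rho$ is the segment of $A_i$ from $v$ to the point at distance $\rho$. Since $\|S\|$ restricted to $B$ equals $\|\sum_i\epsilon_i\llbracket A_i^\rho\rrbracket\|$ (the single point $v$ carries no $\|S\|$ mass, because $\|S\|\ll\Hh^1$), the currents $S\llcorner B$ and $\sum_i \epsilon_i\llbracket A_i^\rho\rrbracket$ agree. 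Therefore, for $f$ Lipschitz,
\[
\partial S(f\varphi)=S(1,f\varphi)=\sum_i\epsilon_i\llbracket A_i^\rho\rrbracket(1,f\varphi)=\sum_i\epsilon_i\bigl(f\varphi(\text{far end})-f\varphi(v)\bigr)=-\Bigl(\sum_i\epsilon_i\Bigr)f(v),
\]
because $\varphi$ vanishes at the far end. On the other hand, $\partial S(f\varphi)=m_vf(v)$, since $v$ is the only atom in $B$. Hence $m_v=-\sum_{i=1}^d\epsilon_i$, and since each $\epsilon_i\equiv1\pmod 2$, we get $m_v\equiv d\pmod 2$. If $d$ is odd, $m_v$ is odd and so $|m_v|\ge1$.

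The main obstacle I expect is justifying the localization rigorously in the metric-current framework: namely, identifying $S\llcorner A_i$ with a current on the open interval so that \cref{lem:interval} applies (using the closed-subspace identification from the preliminaries, applied to compact subsegments and exhausting), and verifying that restriction to $B$ commutes correctly with evaluation on $(1,f\varphi)$. I would handle this by working only with the restriction $S\llcorner B$ and test forms $(g,\pi)$ supported inside the arms away from $v$, where the statement is a one-dimensional computation, and then using that $\|S\|(\{v\})=0$ to pass to test functions not vanishing at $v$.
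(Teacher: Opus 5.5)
Your proposal is correct and follows the paper's proof essentially step for step. You localize to a star neighborhood in which $v$ is the only possible atom of $\partial S$, apply \cref{lem:interval} on each arm to obtain signs $\varepsilon_i\in\{-1,1\}$, and use $\|S\|(\{v\})=0$; testing $\partial S$ against a cutoff equal to one at $v$ then gives $m_v=-\sum_i\varepsilon_i$. The only cosmetic difference is that you test against $f\varphi$ for arbitrary Lipschitz $f$, whereas the paper evaluates at $\varphi$ alone, which already suffices.
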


\begin{proof}
By \cref{lem:I0}, $\spt(\partial S)$ is finite. Shrinking $U$ if necessary, we may suppose that $v$ is the only graph vertex in $U$ and the only possible point of $\spt(\partial S)$ in $U$. Choose $\rho>0$ so that the closed $\rho$-star around $v$ is contained in $U$ and consists of $d$ geodesic arms
\[
   \gamma_i:[0,\rho]\longrightarrow G,\qquad \gamma_i(0)=v,
\]
with pairwise disjoint interiors. Orient every arm away from $v$.

On the open part of each arm the boundary of $S$ vanishes. Restricting locally to the arm and pulling back by the isometry $\gamma_i$ gives a locally integral current on $(0,\rho)$ with zero boundary there. Applying Lemma \ref{lem:interval} on relatively compact subintervals and using compatibility on overlaps, its coefficient is constant on the whole open arm. The unit-multiplicity hypothesis therefore gives a sign $\varepsilon_i\in\{-1,1\}$ such that, as a local current on $\gamma_i((0,\rho))$,
\[
  S
  =\varepsilon_i\,\gamma_{i\#}\llbracket(0,\rho)\rrbracket.
\]
This use of restriction to an open set is standard in the local-current formalism; see \cite{Lang11}. There is no nonzero one-dimensional rectifiable current concentrated at the single point $v$, since the mass measure of a rectifiable $1$-current is absolutely continuous with respect to $\Hh^1$ on its rectifiable carrier \cite[Theorem~4.6]{AK00}.

Choose a Lipschitz function $\varphi:G\to[0,1]$ supported in the open $\rho$-star, with $\varphi(v)=1$ and with $\varphi\circ\gamma_i$ decreasing to $0$ before $t=\rho$ on every arm. Since $v$ is the only possible boundary atom in the support of $\varphi$,
\[
     \partial S(\varphi)=m_v.
\]
On the other hand, by the current boundary formula and the preceding arm representation,
\[
\begin{aligned}
  m_v
  &=S(1,\varphi)
    =\sum_{i=1}^{d}\varepsilon_i
      \int_0^\rho (\varphi\circ\gamma_i)'(t)\,dt\\
  &=-\sum_{i=1}^{d}\varepsilon_i.
\end{aligned}
\]
Reducing modulo two and using $\varepsilon_i\equiv1\pmod2$ yields
\[
     m_v\equiv d\pmod2.
\]
If $d$ is odd, $m_v$ is a nonzero odd integer.
\end{proof}

\begin{corollary}\label{cor:infinite-odd}
Let $G$ be a metric graph-like space containing infinitely many distinct isolated vertices $v_k$ of finite odd degree. There is no current $S\in\I_1(G)$ whose multiplicity has absolute value one almost everywhere on every edge incident to every $v_k$.
\end{corollary}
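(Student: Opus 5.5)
The argument is by contradiction, combining \cref{thm:local-parity} with the finiteness of integral $0$-currents in \cref{lem:I0}. Suppose some $S\in\I_1(G)$ has multiplicity of absolute value one almost everywhere on every edge incident to every $v_k$. Since $S$ is integral, $\partial S\in\I_0(G)$, so by \cref{lem:I0} we can write $\partial S=\sum_{j=1}^N m_j\llbracket x_j\rrbracket$ with $N<\infty$.

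Fix $k$. Because $v_k$ is an isolated vertex of finite degree, it has an ordinary finite star neighborhood $U_k$, and each component of $U_k\setminus\{v_k\}$ is an open initial segment of an edge incident to $v_k$. After shrinking $U_k$, we may assume it contains no other vertex. By hypothesis the multiplicity of $S$ has absolute value one almost everywhere on each such component. This is exactly the hypothesis of \cref{thm:local-parity}. Since $\deg(v_k)$ is odd, that theorem gives $m_{v_k}$ odd, hence nonzero, so $v_k\in\{x_1,\dots,x_N\}$.

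The points $v_k$ are pairwise distinct and infinite in number, so $\{x_1,\dots,x_N\}$ would contain infinitely many points, contradicting $N<\infty$. Equivalently, $\M(\partial S)\ge\sum_k|m_{v_k}|=\infty$, contradicting finite boundary mass. Hence no such $S$ exists.

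The only step needing care is checking the star-neighborhood hypothesis of \cref{thm:local-parity}. One must confirm that ``multiplicity one on every edge incident to $v_k$'' implies multiplicity one on every component of a suitably small $U_k\setminus\{v_k\}$. This holds because isolation and finite degree give a star neighborhood whose punctured components each lie inside a single incident edge. Note also that no restriction to $U_k$ is needed: \cref{thm:local-parity} is applied to the global current $S$ with its global finite boundary decomposition, so the atoms found at different $v_k$ are atoms of one and the same $\partial S$.
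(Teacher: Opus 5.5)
Your proposal is correct and is essentially the paper's proof: apply Theorem~\ref{thm:local-parity} at each $v_k$ to obtain a nonzero odd atom of $\partial S$, which contradicts the finiteness of the atomic representation in Lemma~\ref{lem:I0}. Your additional checks, namely that the hypothesis on the incident edges gives the star-neighborhood hypothesis and that all the atoms belong to the same global $\partial S$, are accurate and only make explicit what the paper leaves implicit.
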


\begin{proof}
By Theorem \ref{thm:local-parity}, every $v_k$ is a nonzero atom of $\partial S$. This contradicts Lemma \ref{lem:I0}.
\end{proof}

We now record the corresponding statement for flat chains modulo $2$. We write
$[R]_2$ for reduction modulo $2$, $\mathcal F_2$ for the mod-$2$ flat norm,
and $\M_2$ for the relaxed mod-$2$ mass, in the sense of Ambrosio--Katz
\cite[Sections~6 and 10]{AmbrosioKatz11}. Thus flat chains modulo $2$ form the
$\mathcal F_2$-completion, the boundary operator is $\mathcal F_2$-continuous,
and an integral flat chain modulo $2$ has finite mod-$2$ mass together with
finite mod-$2$ boundary mass.

\begin{proposition}[The canonical mod-$2$ obstruction]\label{prop:mod2}
Let $H$ be a graph-like space with countably many edges
$E(H)=\{e_1,e_2,\ldots\}$ such that
\[
       \sum_{e\in E(H)}\ell(e)<\infty,
\]
and suppose that the complement of the open edge interiors is
$\Hh^1$-null. Give each edge an arbitrary orientation and let
\[
       C_H=\sum_{e\in E(H)}[\llbracket e\rrbracket]_2,
\]
where the series is understood in mod-$2$ mass, and hence also in the
mod-$2$ flat norm. Then $C_H$ is a rectifiable flat chain modulo $2$ of
finite mod-$2$ mass and is independent of the chosen orientations. At every
isolated vertex $v$ of finite degree, the local boundary coefficient of
$C_H$ is
\[
       \deg_H(v)\pmod2.
\]
Consequently, if $H$ has infinitely many isolated odd-degree vertices, then
\[
       \M_2(\partial C_H)=\infty;
\]
in particular, $C_H$ is not an integral flat chain modulo $2$. Moreover, any $S\in\I_1(H)$ having absolute multiplicity one almost everywhere on every open edge would satisfy $[S]_2=C_H$.
\end{proposition}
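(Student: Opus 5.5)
The proof will follow the structure of the statement: first construct $C_H$, then compute its local boundary, then derive the global consequences. The same parity computation as in Theorem~\ref{thm:local-parity} should work here, except that signs disappear modulo $2$.

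\emph{Construction and orientation independence.} Each $[\llbracket e\rrbracket]_2$ is a rectifiable mod-$2$ chain with $\M_2([\llbracket e\rrbracket]_2)\le \ell(e)$. The series therefore converges absolutely in $\M_2$: its partial sums are Cauchy because
\[
\M_2\Bigl(\sum_{k=m}^{n}[\llbracket e_k\rrbracket]_2\Bigr)\le\sum_{k=m}^n\ell(e_k).
\]
Since $\mathcal F_2\le\M_2$, the series also converges in $\mathcal F_2$, and the limit has finite mod-$2$ mass. Rectifiability should follow from closure of rectifiable mod-$2$ chains under mass-convergent countable sums (Ambrosio--Katz), since everything is carried by the countably $\Hh^1$-rectifiable union of the edges. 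Reversing an orientation replaces $\llbracket e\rrbracket$ by $-\llbracket e\rrbracket$, and $[-\llbracket e\rrbracket]_2=[\llbracket e\rrbracket]_2$, so $C_H$ does not depend on the chosen orientations.

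\emph{Local boundary coefficient.} Fix an isolated vertex $v$ of degree $d$ and a closed $\rho$-star around $v$ made of arms $\gamma_1,\dots,\gamma_d$, as in the proof of Theorem~\ref{thm:local-parity}. Because $v$ is isolated, only the $d$ incident edges meet the open star. (A loop contributes two arms and is counted twice in $\deg_H(v)$.) Restricting to the open star, $C_H$ therefore agrees with the reduction of the finite sum $\sum_i \gamma_{i\#}\llbracket(0,\rho)\rrbracket$. Here we use that edges not incident to $v$ are disjoint from the star, so their contributions restrict to zero. The boundary of this finite sum in the open star is $d\llbracket v\rrbracket$ up to sign. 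Hence the local boundary coefficient of $C_H$ at $v$ is $d \bmod 2$. This requires that the boundary commute with restriction to open sets for flat chains modulo $2$, which I will justify by $\mathcal F_2$-continuity of $\partial$ together with restriction of the finite partial sums. I expect this localisation to be the main technical step. I plan to handle it by testing $\partial C_H$ against a cutoff $\varphi$ as in Theorem~\ref{thm:local-parity}, passing the limit through the partial sums, and observing that only the finitely many incident edges contribute for all large partial sums.

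\emph{Consequences.} Suppose $\M_2(\partial C_H)<\infty$. Then $\partial C_H$ is a mod-$2$ zero-chain of finite mass. By the mod-$2$ analogue of Lemma~\ref{lem:I0}, every nonzero atom has mass $1$, so the atomic part is finite. This contradicts the existence of infinitely many isolated odd-degree vertices, each of which is an atom of $\partial C_H$. More robustly, for every $N$ one can take disjoint stars around $N$ odd vertices and conclude $\M_2(\partial C_H)\ge N$ by lower semicontinuity and locality of mass. Hence $\M_2(\partial C_H)=\infty$, and in particular $C_H$ is not integral.

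Finally, let $S\in\I_1(H)$ have $|\theta_S|=1$ a.e.\ on every open edge. Its mass measure is concentrated on the edge interiors, since the complement is $\Hh^1$-null and $\|S\|\ll\Hh^1$. By Lemma~\ref{lem:interval} (locally on each edge, as in Theorem~\ref{thm:local-parity}), $S\llcorner e=\pm\llbracket e\rrbracket$ for each edge. Thus
\[
S=\sum_e \varepsilon_e\llbracket e\rrbracket,
\]
with the series converging in mass. Reduction modulo $2$ is mass-continuous, so $[S]_2=\sum_e[\llbracket e\rrbracket]_2=C_H$.
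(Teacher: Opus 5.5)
Your construction of $C_H$, the orientation-independence, the local boundary computation at an isolated vertex, and the bound $\M_2(\partial C_H)\ge\#F$ all follow the paper's proof essentially step for step. The problem is in the final ``Moreover'' part.

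There you claim, via Lemma~\ref{lem:interval}, that $S\llcorner e=\pm\llbracket e\rrbracket$ for every edge. That lemma needs $\partial S=0$ on the open edge, and you have no such information. You only know $\partial S\in\I_0(H)$, i.e.\ a finite sum of atoms, and these may lie in edge interiors. In Theorem~\ref{thm:local-parity} this did no harm, because the star could be shrunk to avoid every atom except possibly $v$; a whole edge cannot be shrunk in this way.

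Here is a concrete failure. Parametrize $e$ by $\gamma\colon[0,\ell]\to H$ and take the piece $\gamma_\#\bigl(\llbracket(0,\ell/2)\rrbracket-\llbracket(\ell/2,\ell)\rrbracket\bigr)$. It has absolute multiplicity one almost everywhere, but its boundary is $2\llbracket\gamma(\ell/2)\rrbracket-\llbracket\gamma(0)\rrbracket-\llbracket\gamma(\ell)\rrbracket$. On a finite graph, currents built this way are legitimate elements of $\I_1(H)$ with unit absolute multiplicity on every open edge. They are not of the form $\sum_e\varepsilon_e\llbracket e\rrbracket$ with constant signs. So the representation you reduce modulo $2$ need not exist. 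In the intended use, a hypothetical $S$ on a graph with infinitely many odd vertices, you are not allowed to assume anything about where $\partial S$ sits.

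The fix is the one the paper uses: drop constancy altogether. On each open edge, write the rectifiable representation of $S$ with a Borel coefficient $\theta_e\colon e^\circ\to\{-1,1\}$. If you want more structure, applying Lemma~\ref{lem:interval} between consecutive atoms of $\partial S$ shows $\theta_e$ is piecewise constant with finitely many sign changes, but you do not need this. Since $\theta_e\equiv1\pmod 2$ everywhere, $[S]_2$ and $C_H$ agree on the union of the open edges. Both mass measures vanish on the $\Hh^1$-null complement, so $[S]_2=C_H$.

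One smaller point in the construction. Being Cauchy in $\M_2$ gives you an $\mathcal F_2$-limit. To conclude that the series actually converges to that limit in $\M_2$, you should cite lower semicontinuity of $\M_2$ under $\mathcal F_2$-convergence, as the paper does.

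With these changes your argument is complete.
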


\begin{proof}
For
\[
       C_N=\sum_{j=1}^{N}[\llbracket e_j\rrbracket]_2
\]
and $M>N$, subadditivity of the relaxed mass gives
\[
 \mathcal F_2(C_M-C_N)
 \le \M_2(C_M-C_N)
 \le \sum_{j=N+1}^{M}\ell(e_j).
\]
Thus $(C_N)$ is Cauchy in mod-$2$ mass and in the mod-$2$ flat norm; denote
its flat limit by $C_H$. The countable edge representation, together with
the disjointness of the open edge interiors and the nullity of their
complement, shows directly that $C_H$ is rectifiable and that
\[
       \M_2(C_H)\le \sum_{e\in E(H)}\ell(e)<\infty.
\]
More precisely, lower semicontinuity of $\M_2$ under flat convergence gives
\[
       \M_2(C_H-C_N)
       \le \sum_{j>N}\ell(e_j)\longrightarrow0,
\]
so the displayed series does converge in mod-$2$ mass as asserted.
Reversing the orientation of an edge changes its integer coefficient from
$1$ to $-1$, which has no effect modulo $2$. Hence the limit is independent
of all orientation choices.

The estimate
\[
       \mathcal F_2(\partial A)\le \mathcal F_2(A)
\]
shows that $\partial C_N\to\partial C_H$ in the mod-$2$ flat norm. Let $v$
be an isolated vertex of degree $d$. All the finitely many edges incident to
$v$ occur in some $C_N$. Choose a star neighborhood $U_v$ containing no
other vertex and meeting only those incident edges. On $U_v$ the tail of the
series vanishes, so the local boundary of $C_H$ agrees with that of the
finite sum of its $d$ incident edge chains. Each incident oriented edge has
coefficient $1$ or $-1$ at $v$, and both reduce to $1$ modulo $2$. Hence the
local coefficient of $\partial C_H$ at $v$ is $d$ modulo $2$.

Now let $F$ be any finite set of isolated odd-degree vertices. Shrinking the
star neighborhoods, they may be chosen pairwise disjoint. In each such
neighborhood $\partial C_H$ has a nonzero $0$-dimensional coefficient and
therefore mod-$2$ mass at least one. Consequently
\[
       \M_2(\partial C_H)\ge \#F.
\]
If there are infinitely many isolated odd-degree vertices, finite sets $F$
of arbitrarily large cardinality give
$\M_2(\partial C_H)=\infty$. Thus $C_H$ has no finite-mass integral boundary
modulo $2$ and is not an integral flat chain modulo $2$.

Finally, suppose that $S\in\I_1(H)$ has absolute multiplicity one almost
everywhere on every open edge. In the rectifiable representation of $S$ on
an edge, its signed integer coefficient is therefore $1$ or $-1$ almost
everywhere; it need not be constant if $\partial S$ has atoms in the edge
interior, but its reduction modulo $2$ is nevertheless identically one.
Thus $[S]_2$ and $C_H$ agree on the union of the open edge interiors. Both
one-dimensional mass measures vanish on the complementary $\Hh^1$-null
set, so $[S]_2=C_H$. Since reduction commutes with the boundary,
\[
       \partial C_H=\partial[S]_2=[\partial S]_2,
\]
whose mod-$2$ mass is finite because $\partial S\in\I_0(H)$. This recovers
the contradiction when infinitely many isolated odd vertices are present.
\end{proof}

\begin{remark}\label{rem:decomposition}
The parity obstruction is also consistent with the decomposition theory of one-dimensional currents. Smirnov decomposes solenoidal one-dimensional flows in Euclidean space into elementary solenoids \cite{Smirnov94}; Paolini and Stepanov obtain curve and cycle decompositions for normal currents in metric spaces \cite{PaoliniStepanov12,PaoliniStepanov13}; Bonicatto, Del Nin and Pasqualetto characterize the one-dimensional indecomposable integral currents in terms of injective Lipschitz curves and loops \cite{BDP22}; and recent $SBV$ representations of one-dimensional metric currents are developed by Arroyo-Rabasa and Bouchitt\'e \cite{ArroyoRabasaBouchitte25}. From this viewpoint, a unit of oriented flow cannot pass through infinitely many trivalent vertices without producing infinitely many endpoints. The direct proof of Theorem \ref{thm:local-parity} isolates this conclusion without invoking a global decomposition.
\end{remark}

\section{The sharp defect on finite metric graphs}\label{sec:finite}

For a finite connected metric graph $K$ with at least one edge, let $V(K)$ and $E(K)$ be its vertex and edge sets; loops, if present, are counted twice in the degree. Let
\[
   \Odd(K)=\{v\in V(K):\deg_K(v)\text{ is odd}\}.
\]
Define $\mathscr U(K)$ to be the class of currents $S\in\I_1(K)$ whose multiplicity has absolute value one $\Hh^1$-almost everywhere on every edge. Such a current automatically has characteristic set $K$.

\begin{theorem}\label{thm:finite-defect}
For every finite connected metric graph $K$,
\[
   \inf_{S\in\mathscr U(K)}\M(\partial S)
   =\#\Odd(K).
\]
The infimum is attained by a current whose orientation is balanced at every vertex: the boundary coefficient is $0$ at even-degree vertices and has absolute value $1$ at odd-degree vertices.
\end{theorem}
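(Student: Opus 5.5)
The proof splits into a lower bound obtained from Theorem~\ref{thm:local-parity} and an upper bound obtained from an explicit Eulerian orientation.

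\emph{Lower bound.} Let $S\in\mathscr U(K)$. By \cref{lem:I0}, $\partial S=\sum_j m_j\llbracket x_j\rrbracket$ with finitely many atoms and $\M(\partial S)=\sum_j|m_j|$. Every vertex $v$ of the finite graph $K$ is isolated, has finite degree, and has a star neighborhood $U$. On each arm of $U\setminus\{v\}$ the multiplicity of $S$ has absolute value one almost everywhere, because $S\in\mathscr U(K)$. A loop at $v$ contributes two arms, which is consistent with the convention of counting loops twice in the degree. Theorem~\ref{thm:local-parity} therefore applies at every odd-degree vertex $v$ and gives $|m_v|\ge 1$. Since the vertices are distinct points, summing over $\Odd(K)$ yields $\M(\partial S)\ge\#\Odd(K)$.

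\emph{Upper bound.} We use the handshake lemma: $\#\Odd(K)$ is even. Pair the odd vertices arbitrarily and add one auxiliary combinatorial edge between the two vertices of each pair. These edges are not metric; they are used only to choose an orientation. The augmented multigraph $K'$ is connected and every vertex has even degree, so Euler's theorem gives a closed Euler circuit in $K'$. Orient each edge of $K$ in the direction in which the circuit traverses it. At every vertex of $K'$, in-degree equals out-degree. Removing the auxiliary edges changes this balance only at odd vertices, and there by exactly one (in or out). Set
\[
S=\sum_{e\in E(K)}\llbracket e\rrbracket
\]
with these orientations. Then $\partial S=\sum_{e}(\llbracket e_+\rrbracket-\llbracket e_-\rrbracket)$, whose coefficient at $v$ is in-degree minus out-degree in $K$. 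This coefficient is $0$ at even-degree vertices and $\pm1$ at odd-degree vertices. A loop contributes $0$ to the coefficient and $2$ to the degree, so it does not affect these conclusions. Finally, $S$ is a finite sum of integral currents, so $S\in\I_1(K)$. Its multiplicity is one on each edge interior, so $S\in\mathscr U(K)$, and $\M(\partial S)=\#\Odd(K)$.

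Combining the two bounds shows that the infimum equals $\#\Odd(K)$ and is attained by this balanced orientation. The only point needing care is the lower bound at vertices carrying loops or multiple edges: one must check that the star neighborhood at such a vertex has exactly $\deg_K(v)$ arms. This holds for small radius, since each loop leaves $v$ along two arms. The rest of the argument is routine.
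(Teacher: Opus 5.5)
Your proposal is correct and follows the paper's proof essentially step for step. The lower bound comes from Theorem~\ref{thm:local-parity} applied at each odd-degree vertex. The upper bound pairs the odd vertices, adds auxiliary edges, orients the edges along an Euler circuit, and then deletes the auxiliary edges. Your extra care with loops and multiple edges in the star neighborhoods is a harmless refinement, as is your use of a single circuit because $K'$ is connected, where the paper argues component by component.
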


\begin{proof}
The lower bound follows from Theorem \ref{thm:local-parity}: every odd-degree vertex contributes a nonzero integer atom to $\partial S$, hence at least one unit of boundary mass.

For the upper bound, pair the vertices of $\Odd(K)$; the handshaking lemma guarantees that their number is even. Add one auxiliary edge for each pair. Every vertex of the resulting finite multigraph has even degree, so every connected component admits an Euler circuit. Orient each original and auxiliary edge along such circuits and then delete the auxiliary edges. At every original even-degree vertex the indegree and outdegree agree, whereas at every original odd-degree vertex they differ by exactly one. Assigning multiplicity one to every oriented original edge gives $S\in\mathscr U(K)$ with
\[
   \M(\partial S)=\#\Odd(K).
\]
This is the standard balanced-orientation consequence of Euler's theorem; compare \cite{Diestel17}.
\end{proof}

\begin{remark}\label{rem:finite-positive}
It is convenient to write
\[
   \mathfrak b(K)
   :=\inf\{\M(\partial S):S\in\mathscr U(K)\}.
\]
Then \cref{thm:finite-defect} says precisely that
\[
   \mathfrak b(K)=\#\Odd(K)
\]
for every finite metric graph. In particular, every finite metric graph admits a full-support unit-multiplicity integral current; odd vertices force boundary, but only of finite mass. For graph-like continua of finite length, it is natural to seek a criterion for the finiteness of the analogous defect. Infinite Eulerian theory and cycle-space methods \cite{ThomassenVella08,Richter11,Georgakopoulos12,Georgakopoulos14,EspinozaGartsidePitz20,GartsidePitz23} indicate that ends and accumulation points must enter alongside the parity of isolated finite-degree vertices.
\end{remark}

\section{A compact Ahlfors-regular graph with infinitely many trivalent vertices}\label{sec:graph}

We now construct the underlying metric space of the counterexample.

Set
\[
   \lambda_k=2^{-k-2},\qquad k\ge1.
\]
Start with vertices
\[
   o,a_1,b_1,a_2,b_2,\ldots.
\]
Join $o$ to $a_1$ by an edge $t_0$ of length $1/4$. For every $k\ge1$, join $a_k$ to $b_k$ by two distinct parallel edges $u_k$ and $v_k$, each of length $\lambda_k$, and join $b_k$ to $a_{k+1}$ by an edge $t_k$ of length $\lambda_k$. The forward chain has a unique completion point $z$. Finally attach an edge $c$ of length $1/4$ from $z$ back to $o$. Let $G$ be the resulting path-metric space.

\begin{figure}[t]
\centering
\begin{tikzpicture}[scale=0.95, every node/.style={font=\small}]
\coordinate (o) at (0,0);
\coordinate (a1) at (1.7,0);
\coordinate (b1) at (3.1,0);
\coordinate (a2) at (4.2,0);
\coordinate (b2) at (5.25,0);
\coordinate (a3) at (6.05,0);
\coordinate (b3) at (6.85,0);
\coordinate (z) at (8.5,0);
\draw[-{Latex[length=2mm]}] (o)--(a1) node[midway,below] {$t_0$};
\foreach \A/\B/\lab in {a1/b1/1,a2/b2/2,a3/b3/3}{
  \draw (\A) to[bend left=28] node[midway,above] {$u_{\lab}$} (\B);
  \draw (\A) to[bend right=28] node[midway,below] {$v_{\lab}$} (\B);
}
\draw[-{Latex[length=2mm]}] (b1)--(a2) node[midway,below] {$t_1$};
\draw[-{Latex[length=2mm]}] (b2)--(a3) node[midway,below] {$t_2$};
\draw[densely dotted] (b3)--(z);
\draw (z) to[bend left=45] node[midway,above] {$c$} (o);
\foreach \P/\N in {o/$o$,a1/$a_1$,b1/$b_1$,a2/$a_2$,b2/$b_2$,a3/$a_3$,b3/$b_3$,z/$z$}{
  \fill (\P) circle (1.3pt) node[above=2pt] {\N};
}
\end{tikzpicture}
\caption{The closed shrinking-diamond chain. Every $a_k$ and $b_k$ has degree three. The dotted segment represents the infinite tail converging to $z$.}
\label{fig:graph}
\end{figure}

Every $a_k$ and $b_k$ has degree three. The edge lengths form a summable sequence, but the odd-degree vertices are infinite in number.

\begin{lemma}\label{lem:quotient-circle}
There is a $1$-Lipschitz map $\pi:G\to C$ onto a metric circle $C$ of circumference one such that $\pi$ identifies the two parallel edges $u_k$ and $v_k$ isometrically and is injective on every other open edge. In particular, every closed edge of $G$, with the metric induced from $G$, is isometric to an interval of its prescribed length.
\end{lemma}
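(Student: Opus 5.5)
The plan is to build $\pi$ by hand as an arclength parametrization of the ``folded'' loop, and then read off the edge isometries from the Lipschitz bound together with the smallness of edge lengths compared with the circumference.

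\textbf{Step 1 (lengths add up to one).} Since $\sum_{k\ge1}\lambda_k=\sum_{k\ge1}2^{-k-2}=1/4$, the loop
\[
t_0,\ (u_1\text{ or }v_1),\ t_1,\ (u_2\text{ or }v_2),\ t_2,\ \ldots,\ c
\]
has total length $1/4+2\cdot\tfrac14+1/4=1$. Realize $C$ as $\mathbb R/\mathbb Z$ with its quotient (arc) metric. Assign positions $p(o)=0$ and $p(a_1)=1/4$. Then put
\[
p(b_k)=p(a_k)+\lambda_k,\qquad p(a_{k+1})=p(b_k)+\lambda_k .
\]
These positions increase to $1/4+1/2=3/4$, and we set $p(z)=3/4$.

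\textbf{Step 2 (definition of $\pi$).} On each edge, $\pi$ maps the edge by arclength onto the arc between the positions of its endpoints:
\begin{itemize}[label=--]
\item $t_0$ goes to $[0,1/4]$;
\item both $u_k$ and $v_k$ go to $[p(a_k),p(b_k)]$, so the two parallel edges are identified isometrically;
\item $t_k$ goes to $[p(b_k),p(a_{k+1})]$;
\item $c$ goes to $[3/4,1]$.
\end{itemize}
These agree at shared vertices, so $\pi$ is well defined on the underlying length complex. The arcs attached to distinct non-parallel edges have disjoint interiors, which gives injectivity on every other open edge. Surjectivity is clear, since the arcs cover $C$ (with $3/4$ covered by $z$).

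\textbf{Step 3 ($1$-Lipschitz and extension to $z$).} Each edge map is $1$-Lipschitz with respect to arclength. Hence for any finite edge path between two points, the image curve has length at most the path length. Taking the infimum over such paths gives $d_C(\pi x,\pi y)\le d_G(x,y)$ on the dense uncompleted complex. So $\pi$ is uniformly continuous and extends uniquely, still $1$-Lipschitz, to the completion. The Cauchy chain $a_k\to z$ forces $\pi(z)=\lim p(a_k)=3/4$, which is consistent with the endpoint of $c$.

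\textbf{Step 4 (closed edges are isometric to intervals).} Let $e$ be a closed edge of prescribed length $\ell\le1/4$, and let $x,y\in e$ be at arclength parameters $s,t$. Travelling along $e$ gives $d_G(x,y)\le|s-t|$. Conversely, $\pi$ maps $e$ isometrically for arclength onto an arc of length $\ell\le 1/4<1/2$. Since this is less than half the circumference, $d_C(\pi x,\pi y)=|s-t|$, and the $1$-Lipschitz bound then yields $d_G(x,y)\ge|s-t|$.

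The only delicate point is Step 3 at the accumulation point $z$. There the path metric is defined only after completion, and one must check that the Lipschitz inequality survives the limit. This is handled by the density and uniform-continuity argument. Alternatively, one can note that $d_G(z,\cdot)$ is the limit of $d_G(a_k,\cdot)$ and pass to the limit in the inequality.
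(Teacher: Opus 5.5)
Your route is the paper's: collapse $u_k,v_k$ onto one arc of a circle of circumference one, get the $1$-Lipschitz bound by comparing path lengths, and finish with the semicircle argument. Steps~1 and~2 and the semicircle computation in Step~4 are fine.

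The gap is in Step~3. It sits at the point you flag, but the problem is not the one you address. In $G$ the point $z$ is both the limit of the $a_k$ and the endpoint of $c$. So $d_G$ between two points of $G\setminus\{z\}$ is in general \emph{not} an infimum over finite edge paths.

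Take $x\in c$ at distance $\varepsilon$ from $z$. Its geodesic to $a_k$ runs through $z$ and crosses infinitely many edges, so $d_G(x,a_k)=\varepsilon+2^{-k}$ for small $\varepsilon$ and large $k$. Every finite edge path in $G\setminus\{z\}$ must leave $c$ through $o$, and so has length at least $1-\varepsilon-2^{-k}$.

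Hence Step~3 only proves $d_C(\pi x,\pi y)\le d_{\mathrm{fin}}(x,y)$ for the finite-path metric $d_{\mathrm{fin}}\ge d_G$. Uniform continuity then extends $\pi$ only to the completion $Y$ of $(G\setminus\{z\},d_{\mathrm{fin}})$. In $Y$ the end $z_1$ of the chain and the end $z_2$ of $c$ are distinct points at distance $1$.

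Observing that both points go to $3/4$ makes $\pi$ well defined on $G=Y/(z_1\sim z_2)$. It does not control the Lipschitz constant for the smaller glued metric. Your alternative remark is circular, because $d_G(a_k,x)$ for $x\in c$ is itself realized through $z$.

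Step~4 needs $d_C\le d_G$ for the true metric, so the edge isometries are not yet proved either. For every edge, the competing route around the loop passes through $z$.

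The repair is short. The glued metric is
\[
 d_G(x,y)=\min\bigl\{d_Y(x,y),\ d_Y(x,z_1)+d_Y(z_2,y),\ d_Y(x,z_2)+d_Y(z_1,y)\bigr\}.
\]
Since $\pi(z_1)=\pi(z_2)$, the triangle inequality in $C$ bounds $d_C(\pi x,\pi y)$ by each of the three terms. So $\pi$ is $1$-Lipschitz on $(G,d_G)$, and Step~4 then goes through verbatim.

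Alternatively, argue as the paper does, with arbitrary rectifiable paths in $G$:
\begin{itemize}
\item A path avoiding $z$ has compact image at positive distance from $z$. It therefore stays in a finite subgraph, since the tail of the chain beyond $a_N$ lies within $2^{-N}$ of $z$.
\item A path through $z$ can be cut at its first and last visits to $z$.
\end{itemize}
With either addition your argument is complete and agrees with the paper's.
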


\begin{proof}
Collapse $u_k$ and $v_k$ onto a single interval of length $\lambda_k$ and leave $c,t_0,t_k$ unchanged. In their natural cyclic order the resulting arcs have total length
\[
  \frac14+\frac14+2\sum_{k=1}^\infty\lambda_k=1,
\]
so their completion is a metric circle $C$ of circumference one. The quotient map $\pi$ is unit speed on each original edge. Hence every rectifiable path in $G$ maps to a path of no greater length in $C$, and therefore $\pi$ is $1$-Lipschitz for the intrinsic path metric on $G$.

Let $x,y$ lie on one closed edge $e$ and let $L_e(x,y)$ denote their arclength separation along $e$. Every edge has length at most $1/4$, so the corresponding arc of $C$ is no longer than a semicircle and
\[
 d_C(\pi x,\pi y)=L_e(x,y).
\]
Since $\pi$ is $1$-Lipschitz,
\[
 L_e(x,y)\le d_G(x,y),
\]
while the subarc of $e$ gives the reverse inequality. Thus equality holds and $e$ is isometrically embedded.
\end{proof}

\begin{remark}\label{rem:no-shortcuts}
The final assertion of Lemma \ref{lem:quotient-circle} is not automatic for a length graph containing cycles: the rest of the graph could shorten the distance between two points of a given edge. The circle quotient rules out precisely these shortcuts. Consequently, the prescribed edge lengths are the intrinsic lengths used both in the computation of \(\Hh^1(G)\) below and in the Euclidean prism charts of \cref{sec:highdim}.
\end{remark}

\begin{proposition}\label{prop:compact-length}
The space $G$ is compact and
\[
      \Hh^1(G)=\frac54.
\]
With its intrinsic path metric, $G$ is a compact length space and hence geodesic.
\end{proposition}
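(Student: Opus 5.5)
The plan is to prove compactness by total boundedness, then compute $\Hh^1(G)$ from the edge decomposition, and finally invoke Hopf--Rinow.

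\emph{Total boundedness.} Fix $\varepsilon>0$ and choose $K$ with $\sum_{k\ge K}3\lambda_k<\varepsilon$. The finite subgraph $G_{<K}$ consists of $c$, $t_0$, and $u_k,v_k,t_k$ for $k<K$. It is a finite union of intervals, so it is covered by finitely many $\varepsilon$-balls. Every point of the tail $G_{\ge K}$ (the edges $u_k,v_k,t_k$ with $k\ge K$, together with $z$) can be joined to $a_K$ by a path inside the tail. Following the forward chain and, where needed, one of the two parallel edges, such a path has length at most $\sum_{k\ge K}2\lambda_k<\varepsilon$. Hence the tail lies in $B(a_K,\varepsilon)$, and $G$ is totally bounded. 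Since $G$ is defined as the completion of the countable length complex, with $z$ as the unique added completion point, $G$ is complete and therefore compact.

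\emph{Hausdorff measure.} Write $G$ as the countable union of the closed edges together with the single point $z$. Adjacent edges meet only at vertices, which form a countable set and hence an $\Hh^1$-null set. Countable additivity of $\Hh^1$ on Borel sets then gives
\[
\Hh^1(G)=\sum_e \Hh^1(e).
\]
By Lemma~\ref{lem:quotient-circle}, each closed edge is isometric to an interval of its prescribed length, so $\Hh^1(e)=\ell(e)$. (This is exactly why the no-shortcut statement of Remark~\ref{rem:no-shortcuts} is needed.) Summing,
\[
\Hh^1(G)=\tfrac14+\tfrac14+\sum_{k\ge1}3\cdot 2^{-k-2}=\tfrac12+\tfrac34=\tfrac54 .
\]

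\emph{Geodesic property.} On the dense union of edges, the metric is by construction the infimum of lengths of paths. I would check that the completion is still a length space: approximate $z$ by the vertices $a_k$, whose distance to $z$ tends to zero along the chain, and note that the length-space property passes to completions via approximate midpoints. A complete, locally compact length space is geodesic by the Hopf--Rinow--Cohn-Vossen theorem; here compactness already supplies local compactness.

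The main obstacle I anticipate is the careful treatment of the completion point $z$. One must confirm that no other completion points appear, which holds because every Cauchy sequence either eventually stays in a finite subgraph or converges to $z$. One must also confirm that the path metric extends to a length metric at $z$, using that paths ending at $z$ are limits of paths along the chain. Everything else is bookkeeping.
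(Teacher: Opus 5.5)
Your proposal is correct and follows essentially the paper's route. Lemma~\ref{lem:quotient-circle} makes each edge isometric to its interval, so $\Hh^1(G)$ is the sum of the edge lengths; the tail estimate $\sup_{x\in G_N}d_G(x,z)\le 2\sum_{k\ge N}\lambda_k$ gives compactness; and a compact length space is geodesic. The only difference is cosmetic: you phrase compactness as total boundedness plus completeness, whereas the paper checks sequential compactness directly (a sequence either has infinitely many terms in a fixed finite subgraph or converges to $z$), which avoids any separate discussion of completeness at $z$.
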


\begin{proof}
By Lemma \ref{lem:quotient-circle}, every edge is isometrically embedded. Its interior is disjoint from the interiors of all other edges, and the vertex/completion set is countable and hence $\Hh^1$-null. Therefore
\[
\begin{aligned}
 \Hh^1(G)
 &=\ell(c)+\ell(t_0)+\sum_{k=1}^\infty
 \bigl(\ell(u_k)+\ell(v_k)+\ell(t_k)\bigr)\\
 &=\frac14+\frac14+3\sum_{k=1}^\infty2^{-k-2}
 =\frac54.
\end{aligned}
\]

Let $G_N$ be the part of the forward chain beginning at $a_N$. Every point of $G_N$ can be joined to $z$ by choosing one of the two diamond edges at each stage and then the connector. Hence
\[
   \sup_{x\in G_N}d_G(x,z)
   \le 2\sum_{k=N}^\infty\lambda_k
   =2^{-N},
\]
and therefore
\[
   \diam(G_N)\le 2^{1-N}\longrightarrow0.
\]
Every sequence either has infinitely many terms in a fixed finite subgraph, where sequential compactness is immediate, or eventually lies in $G_N$ for arbitrarily large $N$ and hence converges to $z$. Thus $G$ is compact. It is a length space by construction; a compact length space is geodesic \cite[Chapter 2]{BBI01}.
\end{proof}

We record a quantitative regularity property.

\begin{proposition}\label{prop:ahlfors}
For every $x\in G$ and every $0<r\le\diam G$,
\[
      r\le \Hh^1(B_G(x,r))\le 4r.
\]
In particular $G$ is $1$-Ahlfors regular and hence doubling.
\end{proposition}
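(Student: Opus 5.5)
The lower bound follows from geodesicity and a diameter comparison; the upper bound needs a separate count over the diamond chain. The lower bound $\Hh^1(B_G(x,r))\ge r$ comes first. $G$ is geodesic by \cref{prop:compact-length}, and $0<r\le\diam G$, so some point $y$ has $d_G(x,y)\ge r/2$. I would first try for $d_G(x,y)\ge r$, but that needs $r$ to be at most the radius, so I do not rely on it. Instead I use connectedness. If the ball is not all of $G$, take a point $y$ with $d_G(x,y)\ge r$ and a geodesic $\gamma$ from $x$ to $y$. The initial piece $\gamma([0,r))$ lies in $B_G(x,r)$ and has length $r$. A geodesic is injective, so $\Hh^1$ of this piece is $r$. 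If instead $B_G(x,r)=G$, then $\Hh^1(B)=\Hh^1(G)=5/4$. Here I compare with $\diam G$, which is at most $1/2$ via the circle quotient and the short connector path. So $5/4\ge r$ holds in this case too.

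For the upper bound I use the circle map $\pi$ from \cref{lem:quotient-circle}. It is $1$-Lipschitz, so $\pi(B_G(x,r))\subset B_C(\pi x,r)$, an arc of length at most $2r$. The preimage under $\pi$ of an arc $A$ has $\Hh^1$ at most twice the length of $A$. This is because $\pi$ is injective on every edge except that it folds each diamond pair $u_k,v_k$ two-to-one, and the vertex set is null. Hence $\Hh^1(B_G(x,r))\le \Hh^1(\pi^{-1}(B_C(\pi x,r)))\le 2\cdot 2r=4r$.

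Finally, $1$-Ahlfors regularity is exactly the two-sided estimate. Doubling follows by the standard volume argument. Suppose $B(x,2r)$ contains points at mutual distance at least $r$. The balls of radius $r/2$ around them are disjoint and each has measure at least $r/2$. They all sit inside $B(x,3r)$, whose measure is at most $12r$, or $5/4$ when $3r$ exceeds $\diam G$, where one uses $r\gtrsim\diam G$. This bounds the number of such points by a constant.

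I expect the main obstacle to be the small-scale cases of the lower bound near $z$ and the care in the case $B=G$. The upper bound via the two-to-one count should be routine.
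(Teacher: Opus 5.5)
Your proposal is correct and follows essentially the paper's proof. For the lower bound, you take an initial geodesic segment of length $r$ when $B_G(x,r)\neq G$ and use a global comparison when $B_G(x,r)=G$; for the upper bound, you use the two-to-one circle quotient $\pi$ together with $\pi(B_G(x,r))\subset B_C(\pi x,r)$. The differences are cosmetic. In the case $B_G(x,r)=G$ the paper uses the general inequality $\Hh^1(G)\ge\diam G$ for connected sets rather than your explicit $\diam G\le\tfrac12$, whose justification comes from the length-one circuits through $G$, not from the $1$-Lipschitz map $\pi$, which only bounds distances from below. The paper also simply cites doubling as standard, where you run the packing argument.
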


\begin{proof}
Use the quotient map $\pi:G\to C$ from Lemma \ref{lem:quotient-circle}. Outside the countable vertex set, every point of $C$ has at most two preimages. Moreover, the images of distinct open edges are disjoint except that $u_k^\circ$ and $v_k^\circ$ have the same image, and $\pi$ is an isometry on each edge. Decomposing a Borel set $A\subset G$ into its intersections with the open edges therefore gives
\[
      \Hh^1(A)\le 2\Hh^1(\pi(A)).
\]
If $0<r\le1/2$, then by the $1$-Lipschitz property
\[
 \pi(B_G(x,r))\subset B_C(\pi(x),r),
\]
and hence
\[
 \Hh^1(B_G(x,r))
 \le2\Hh^1(B_C(\pi(x),r))
 \le4r.
\]
If $r>1/2$, then
\[
 \Hh^1(B_G(x,r))\le\Hh^1(G)=\frac54<4r.
\]

For the lower bound, if $B_G(x,r)\neq G$, choose $y\notin B_G(x,r)$. Since $G$ is geodesic, a minimizing geodesic from $x$ to $y$ contains an initial subsegment of length $r$ (up to its terminal endpoint) inside the ball, and this subsegment is isometric to an interval. Thus $\Hh^1(B_G(x,r))\ge r$. If $B_G(x,r)=G$, then
\[
  \Hh^1(B_G(x,r))=\Hh^1(G)\ge\diam G\ge r;
\]
the inequality $\Hh^1(G)\ge\diam G$ holds for every connected set of finite $\Hh^1$-measure. This proves the stated Ahlfors bounds. The doubling conclusion is standard; see, e.g., \cite{Heinonen01,AmbrosioTilli04}.
\end{proof}

\section{Boundaryless currents and sharp multiplicity bounds}\label{sec:current}

Orient the edges as follows:
\[
  t_0:o\to a_1,
  \qquad
  u_k,v_k:a_k\to b_k,
  \qquad
  t_k:b_k\to a_{k+1},
\]
and orient the closing edge $c$ from $z$ to $o$. Define
\begin{equation}\label{eq:T}
  T
  =2\llbracket c\rrbracket+2\llbracket t_0\rrbracket
   +\sum_{k=1}^{\infty}
   \left(\llbracket u_k\rrbracket
        +\llbracket v_k\rrbracket
        +2\llbracket t_k\rrbracket\right).
\end{equation}

The coefficient two on each connector is exactly what restores Kirchhoff balance at the trivalent vertices.

\begin{proposition}\label{prop:Tmass}
The current $T$ is integer rectifiable and
\[
      \M(T)=2.
\]
Moreover $\partial T=0$, so $T\in\I_1(G)$.
\end{proposition}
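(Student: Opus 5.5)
The plan is to treat $T$ as the mass-convergent limit of its finite partial sums and to verify the three claims (rectifiability, mass, boundary) separately, the boundary being the only delicate point. Set
\[
T_N=2\llbracket c\rrbracket+2\llbracket t_0\rrbracket+\sum_{k=1}^{N}\bigl(\llbracket u_k\rrbracket+\llbracket v_k\rrbracket+2\llbracket t_k\rrbracket\bigr).
\]
By Lemma \ref{lem:quotient-circle} each edge is isometrically embedded, so $\M(\llbracket e\rrbracket)=\ell(e)$. Since the open edge interiors are pairwise disjoint, the mass measures add. Hence
\[
\M(T_N)=\tfrac12+\tfrac12+\sum_{k=1}^N 4\lambda_k,
\qquad
\M(T-T_N)\le 4\sum_{k>N}\lambda_k\to0.
\]
Thus the series converges in mass, and the limit is integer rectifiable: its multiplicity is $2$ on $c,t_0,t_k$ and $1$ on $u_k,v_k$, carried by a countable union of isometric edges, and the remaining vertex set is countable and hence $\Hh^1$-null. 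The mass is then
\[
\M(T)=1+4\sum_{k\ge1}2^{-k-2}=1+1=2.
\]

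For the boundary I compute $\partial T_N$ directly from $\partial\llbracket e\rrbracket=\llbracket e_+\rrbracket-\llbracket e_-\rrbracket$. The connectors contribute $2\llbracket o\rrbracket-2\llbracket z\rrbracket$ from $c$, and $2\llbracket a_1\rrbracket-2\llbracket o\rrbracket$ from $t_0$. For each $k$, the edges $u_k,v_k$ give $2\llbracket b_k\rrbracket-2\llbracket a_k\rrbracket$, and $t_k$ gives $2\llbracket a_{k+1}\rrbracket-2\llbracket b_k\rrbracket$. The sum telescopes to
\[
\partial T_N=2\llbracket a_{N+1}\rrbracket-2\llbracket z\rrbracket.
\]
By the diameter estimate in Proposition \ref{prop:compact-length}, $d_G(a_{N+1},z)\le 2^{-N-1}\to0$.

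The main obstacle is passing to the limit in the boundary, since $\M(\partial T_N)=4$ does not tend to zero, so mass convergence of boundaries fails. I would instead argue through the weak definition. Mass convergence $T_N\to T$ gives $T_N(f,\pi)\to T(f,\pi)$ for bounded Lipschitz $f,\pi$, and therefore
\[
\partial T(f)=\lim_N \partial T_N(f)=\lim_N 2\bigl(f(a_{N+1})-f(z)\bigr)=0
\]
by continuity of $f$. Hence $\partial T=0$.

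Finally, $T$ is integer rectifiable with finite mass and its boundary $0$ trivially has finite mass, so $T$ is normal and $T\in\I_1(G)$. An alternative route to the boundary, if one prefers, is the flat estimate $\partial T_N=2\,\partial(\text{tail path from } a_{N+1}\text{ to } z)$. The tail path has mass at most $2\cdot 2^{-N-1}$, so $\partial T_N\to0$ in the flat norm, which gives the same conclusion.
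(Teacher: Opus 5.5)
Your proposal is correct and follows the paper's proof essentially step for step. You compute the mass edge by edge using the isometric edges of Lemma~\ref{lem:quotient-circle} and the disjoint edge interiors, you use the telescoping identity $\partial T_N=2\llbracket a_{N+1}\rrbracket-2\llbracket z\rrbracket$, and you pass to the limit via $\partial T(f)=T(1,f)=\lim_N\partial T_N(f)=0$ using $\M(T-T_N)\to0$ and $a_{N+1}\to z$. (In your optional flat-norm aside the sign is reversed: $\partial T_N=-2\,\partial P$ for the tail path $P$ oriented from $a_{N+1}$ to $z$, which does not affect the argument.)
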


\begin{proof}
By Lemma \ref{lem:quotient-circle}, each oriented edge current is the push-forward of the standard interval current by an isometric parametrization. The open edge images are pairwise disjoint, and the weighted sum of their lengths is finite. Hence the countable representation in \eqref{eq:T} is an integer rectifiable current in the sense of \cite[Section 4]{AK00}, and there is no cancellation in its mass. Thus
\[
\begin{aligned}
 \M(T)
 &=2\ell(c)+2\ell(t_0)
   +\sum_{k=1}^{\infty}
      \bigl(\ell(u_k)+\ell(v_k)+2\ell(t_k)\bigr)\\
 &=\frac12+\frac12+4\sum_{k=1}^{\infty}2^{-k-2}
 =2.
\end{aligned}
\]

Let
\[
 T_N=2\llbracket c\rrbracket+2\llbracket t_0\rrbracket
 +\sum_{k=1}^{N}
 \left(\llbracket u_k\rrbracket+\llbracket v_k\rrbracket
 +2\llbracket t_k\rrbracket\right).
\]
The boundary formula for an oriented interval and telescoping give
\[
      \partial T_N=2\llbracket a_{N+1}\rrbracket-2\llbracket z\rrbracket.
\]
Since $a_{N+1}\to z$, for every Lipschitz function $f$ on $G$,
\[
      \partial T_N(f)=2f(a_{N+1})-2f(z)\longrightarrow0.
\]
Moreover $\M(T-T_N)\to0$, so $T_N(1,f)\to T(1,f)$. Therefore
\[
      \partial T(f)=T(1,f)=\lim_{N\to\infty}\partial T_N(f)=0.
\]
Thus $\partial T=0$. Since $T$ is integer rectifiable with finite mass and its boundary is the zero integer rectifiable current, $T\in\I_1(G)$ by the Ambrosio--Kirchheim definition of integral currents; see \cite[Sections 4 and 8]{AK00}.
\end{proof}

\begin{proposition}\label{prop:setT}
The characteristic set of $T$ is all of $G$:
\[
     \set(T)=G.
\]
\end{proposition}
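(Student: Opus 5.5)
We show directly that the lower density $\Theta_{*1}(\|T\|,x)$ is positive at every $x\in G$. The key input is a pointwise lower bound on the mass measure. By Proposition~\ref{prop:Tmass} and its proof, $\|T\|$ is the weighted length measure
\[
\|T\|=\theta\,\Hh^1\llcorner G ,
\]
where $\theta\in\{1,2\}$ on each open edge and the exceptional set is $\Hh^1$-null. The weights never cancel: the open edge interiors are pairwise disjoint and each edge is isometrically embedded by Lemma~\ref{lem:quotient-circle}. Hence
\[
\|T\|(A)\ge \Hh^1(A)\qquad\text{for every Borel } A\subset G .
\]
To justify this we identify the mass measure of an integer rectifiable current given by a countable representation with disjoint carriers. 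By \cite[Theorem 4.6/9.5]{AK00}, it equals the multiplicity times $\Hh^1$ restricted to the carrier, up to the area factor, which is one in dimension one for isometric parametrizations. Alternatively, we can argue edge by edge: restricting $T$ to an open edge $e^\circ$ gives $\theta_e\,\llbracket e\rrbracket$ there, whose mass measure is $\theta_e\,\Hh^1\llcorner e$.

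With this bound, Proposition~\ref{prop:ahlfors} gives, for all $x\in G$ and $0<r\le\diam G$,
\[
\|T\|(B(x,r))\ge \Hh^1(B_G(x,r))\ge r .
\]
Therefore
\[
\Theta_{*1}(\|T\|,x)\ge \frac{1}{\omega_1}=\frac12>0
\]
for every $x$, so $G\subset\set(T)$. The reverse inclusion $\set(T)\subset G$ is trivial, since $T$ is a current in $G$.

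We handle all points uniformly: interior edge points, the trivalent vertices $a_k,b_k$, the vertex $o$, and the accumulation point $z$. The Ahlfors lower bound from Proposition~\ref{prop:ahlfors} holds at every point, including $z$, so no case split is needed. This is the reason we route the argument through that proposition rather than through local star neighborhoods, which do not exist at $z$.

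The main obstacle is the rigorous identification of $\|T\|$ with the weighted $\Hh^1$ measure. The definition of $\|T\|$ in Ambrosio--Kirchheim is a minimal measure, so only the upper bound $\|T\|\le\theta\Hh^1$ is immediate. For the lower bound on a single edge $e$, we test $T$ against $(g,\pi_e)$, where $g$ is supported near $e^\circ$ and $\pi_e$ is a $1$-Lipschitz extension of the arclength coordinate on $e$; such extensions exist by McShane. Locality of currents makes the contributions of the other edges vanish on the support of $g$. This yields
\[
\|T\|\llcorner e^\circ\ge \theta_e\,\Hh^1\llcorner e^\circ\ge \Hh^1\llcorner e^\circ .
\]
Summing over the countably many edges and discarding the $\Hh^1$-null vertex set gives the needed inequality.
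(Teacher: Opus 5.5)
Your proposal is correct and follows essentially the same route as the paper. Both arguments first bound $\|T\|\ge\Hh^1\llcorner G$ using edge multiplicities in $\{1,2\}$, then apply the Ahlfors lower bound of Proposition~\ref{prop:ahlfors} to get lower density at least $1/2$ at every point, including $z$. Your edgewise check of the mass lower bound, testing against $(g,\pi_e)$ with $\pi_e$ a McShane extension of the arclength coordinate, only makes explicit a step the paper states without proof.
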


\begin{proof}
On every edge the absolute multiplicity of $T$ is either one or two. Consequently
\[
      \|T\|\ge \Hh^1\llcorner G.
\]
By Proposition \ref{prop:ahlfors}, for every $x\in G$ and all sufficiently small $r$,
\[
  \frac{\|T\|(B(x,r))}{\omega_1 r}
  \ge \frac{\Hh^1(B(x,r))}{2r}
  \ge\frac12.
\]
Thus every point has positive lower one-dimensional density for $\|T\|$.
\end{proof}

Combining the preceding propositions gives a compact one-dimensional integral current space $(G,d_G,T)$ whose current structure is boundaryless. We next show that both the use of multiplicity two and the geometry of the cycle can be quantified sharply.

Recall that
\[
 \mathscr Z(G)=\{R\in\I_1(G):\partial R=0,\ \set(R)=G\}.
\]

\begin{theorem}[Classification of full-support cycles]\label{thm:cycle-classification}
A current $R$ belongs to $\mathscr Z(G)$ if and only if it has a unique representation
\begin{equation}\label{eq:cycle-classification}
 R=r\left(
       \llbracket c\rrbracket+\llbracket t_0\rrbracket
       +\sum_{k=1}^{\infty}\llbracket t_k\rrbracket
      \right)
   +\sum_{k=1}^{\infty}
      \left(p_k\llbracket u_k\rrbracket
            +q_k\llbracket v_k\rrbracket\right),
\end{equation}
where
\begin{equation}\label{eq:cycle-coefficients}
 r\in\mathbb Z\setminus\{0\},
 \qquad
 p_k,q_k\in\mathbb Z\setminus\{0\},
 \qquad
 p_k+q_k=r
\end{equation}
for every $k$, and
\begin{equation}\label{eq:cycle-summability}
 \sum_{k=1}^{\infty}\lambda_k
       (|p_k|+|q_k|)<\infty.
\end{equation}
For such a current,
\begin{equation}\label{eq:cycle-mass}
       \M(R)=\frac34|r|
       +\sum_{k=1}^{\infty}\lambda_k(|p_k|+|q_k|).
\end{equation}
\end{theorem}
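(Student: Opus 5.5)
We prove the two directions separately. In both, the edge structure of Lemma \ref{lem:quotient-circle} lets us read currents off edge by edge.

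\emph{Sufficiency.} Let $r,p_k,q_k$ satisfy \eqref{eq:cycle-coefficients} and \eqref{eq:cycle-summability}, and define $R$ by \eqref{eq:cycle-classification}.

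First, the series converges in mass. The $r$-part has mass $|r|(\tfrac14+\tfrac14+\sum_k\lambda_k)=\tfrac34|r|$, and the $u_k,v_k$ part is controlled by \eqref{eq:cycle-summability}. So $R$ is integer rectifiable, exactly as in the proof of Proposition \ref{prop:Tmass}.

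Second, \eqref{eq:cycle-mass} holds. The open edges are pairwise disjoint and isometrically embedded, so there is no cancellation and the mass is the sum of the edge masses.

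Third, $\partial R=0$. We repeat the truncation argument of Proposition \ref{prop:Tmass}. The relation $p_k+q_k=r$ makes each $a_k$ and each $b_k$ balanced, so the truncated boundary telescopes to
\[
\partial R_N=r\llbracket a_{N+1}\rrbracket-r\llbracket z\rrbracket .
\]
Since $a_{N+1}\to z$ and $\M(R-R_N)\to0$, the boundary vanishes.

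Fourth, $\set(R)=G$. All coefficients are nonzero integers, so $\|R\|\ge\Hh^1\llcorner G$. Proposition \ref{prop:ahlfors} then gives positive lower density at every point, as in Proposition \ref{prop:setT}.

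\emph{Necessity.} Let $R\in\mathscr Z(G)$. We first show that $R$ is constant on each open edge.

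On each open edge $e$, the boundary of $R$ vanishes. Pull back by the isometric parametrization and apply Lemma \ref{lem:interval} on compact subintervals; this shows the coefficient is an integer $m_e$, constant along $e$. The set $\set(R)=G$ contains the open edges, so $\|R\|(e^\circ)>0$ and hence $m_e\neq0$.

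Next, $R$ is the sum of its edge pieces. The complement of the open edges is countable, hence $\Hh^1$-null. By \cite[Theorem 4.6]{AK00}, $\|R\|$ is absolutely continuous with respect to $\Hh^1$. Therefore
\[
R=\sum_e m_e\llbracket e\rrbracket ,
\]
with $\sum_e m_e\ell(e)$ taken in absolute value equal to $\M(R)<\infty$. This gives uniqueness of the representation and the summability \eqref{eq:cycle-summability}.

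Now we impose balance at each vertex. Every $a_k$ and $b_k$ is an isolated vertex of degree three. We use the boundary computation in the proof of Theorem \ref{thm:local-parity}, which did not use unit multiplicity once the arms carry constant coefficients. The same test-function argument gives that the atom of $\partial R$ at a vertex equals the signed sum of the incident coefficients. Setting this to zero yields:
\begin{itemize}[label={}]
\item at $a_k$: $m_{t_{k-1}}=m_{u_k}+m_{v_k}$;
\item at $b_k$: $m_{u_k}+m_{v_k}=m_{t_k}$.
\end{itemize}
Hence all connector coefficients $m_{t_k}$ equal $m_{t_0}=:r$, and $p_k+q_k=r$. At $o$ the same argument gives $m_c=m_{t_0}$, so the coefficient on $c$ is also $r$, and $r\neq0$.

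\emph{Main obstacle.} The delicate point is the local current argument. We must justify that restricting $R$ to an open edge gives a (local) current with zero boundary. We must also justify the star-neighbourhood computation at vertices for general integer coefficients, not only unit ones. This amounts to rerunning the proof of Theorem \ref{thm:local-parity} with $\varepsilon_i$ replaced by arbitrary integers $m_i$. No condition is needed at $z$: once all vertex relations hold, the truncation argument already shows the boundary vanishes there.
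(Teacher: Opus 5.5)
Your proposal is correct and follows essentially the same route as the paper. For necessity you use edgewise constancy via Lemma \ref{lem:interval}, nonvanishing of the coefficients from $\set(R)=G$, and Kirchhoff relations at the isolated vertices $o,a_k,b_k$ from the star-neighbourhood test-function computation. For sufficiency you use the telescoping truncation $\partial R_N=r\llbracket a_{N+1}\rrbracket-r\llbracket z\rrbracket$ together with the Ahlfors lower bound. Your appeal to absolute continuity of $\|R\|$ with respect to $\Hh^1$, to show that $R$ is the sum of its edge pieces, spells out a step the paper leaves implicit.
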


\begin{proof}
Let $R\in\mathscr Z(G)$. On the interior of each edge, the boundary of $R$ vanishes. The interval constancy lemma therefore gives a constant integer coefficient on that edge. No coefficient can vanish, because an interior point of an edge with zero coefficient would not belong to $\set(R)$. Denote the coefficients on $c,t_0,t_k,u_k,v_k$ by $s,r_0,r_k,p_k,q_k$, respectively, using the orientations fixed above.

The vertices $o,a_k,b_k$ are isolated. Evaluating $\partial R=0$ in small star neighborhoods gives the Kirchhoff relations
\[
       s=r_0,
       \qquad
       r_{k-1}=p_k+q_k,
       \qquad
       p_k+q_k=r_k,
\]
where $r_{0}$ denotes the coefficient on $t_0$. Hence all the coefficients $s,r_0,r_1,r_2,\ldots$ have a common nonzero value $r$, and \eqref{eq:cycle-coefficients} follows. Since the open edge interiors are pairwise disjoint, finiteness of $\M(R)$ is equivalent to \eqref{eq:cycle-summability}; it also gives \eqref{eq:cycle-mass}, because
\[
 \ell(c)+\ell(t_0)+\sum_{k=1}^{\infty}\ell(t_k)
 =\frac14+\frac14+\frac14=\frac34.
\]
This proves the necessity and uniqueness of the representation.

Conversely, suppose \eqref{eq:cycle-coefficients}--\eqref{eq:cycle-summability} hold and define $R$ by \eqref{eq:cycle-classification}. It is an integer rectifiable current of finite mass. For the finite truncation
\[
 R_N=r\bigl(\llbracket c\rrbracket+\llbracket t_0\rrbracket\bigr)
 +\sum_{k=1}^{N}
   \left(p_k\llbracket u_k\rrbracket
        +q_k\llbracket v_k\rrbracket
        +r\llbracket t_k\rrbracket\right),
\]
the relations $p_k+q_k=r$ yield
\[
       \partial R_N
       =r\llbracket a_{N+1}\rrbracket-r\llbracket z\rrbracket.
\]
As $a_{N+1}\to z$ and $R_N\to R$ in mass, the same argument as in Proposition \ref{prop:Tmass} gives $\partial R=0$. Finally, every edge coefficient has absolute value at least one, and hence
\[
       \|R\|\ge\Hh^1\llcorner G.
\]
The Ahlfors lower bound in \cref{prop:ahlfors} then gives $\set(R)=G$.
\end{proof}

For $R\in\mathscr Z(G)$, set
\[
       \operatorname{mult}_{\infty}(R)
       :=\operatorname*{ess\,sup}_{x\in G}\theta_R(x).
\]

\begin{corollary}\label{cor:sharp-cycle}
For the graph $G$,
\[
 \min_{R\in\mathscr Z(G)}\operatorname{mult}_{\infty}(R)=2,
 \qquad
 \min_{R\in\mathscr Z(G)}\M(R)=\frac32.
\]
Both minima are attained by
\begin{equation}\label{eq:Rstar}
 R_*=\llbracket c\rrbracket+\llbracket t_0\rrbracket
 +\sum_{k=1}^{\infty}
   \left(2\llbracket u_k\rrbracket
        -\llbracket v_k\rrbracket
        +\llbracket t_k\rrbracket\right).
\end{equation}
\end{corollary}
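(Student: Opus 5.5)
The plan is to read both minima off the classification in \cref{thm:cycle-classification}. Every $R\in\mathscr Z(G)$ has the form \eqref{eq:cycle-classification}, with a nonzero integer $r$ and nonzero integers $p_k,q_k$ satisfying $p_k+q_k=r$. The multiplicity $\theta_R$ is $|r|$ on the connectors $c,t_0,t_k$, and it is $|p_k|$, $|q_k|$ on $u_k$, $v_k$. Since the open edge interiors are disjoint sets of positive length, the essential supremum is the plain supremum of these absolute values.

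\textbf{Lower bound on multiplicity.} Suppose every coefficient had absolute value one. Then $|r|=1$ and $p_k,q_k\in\{\pm1\}$. But $p_k+q_k$ is then even, so it cannot equal $r$. This is exactly the parity obstruction of \cref{thm:local-parity}, seen through the classification. Hence $\operatorname{mult}_\infty(R)\ge2$ for every $R\in\mathscr Z(G)$.

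\textbf{Lower bound on mass.} By \eqref{eq:cycle-mass}, $\M(R)=\tfrac34|r|+\sum_k\lambda_k(|p_k|+|q_k|)$. For each $k$ I claim $|p_k|+|q_k|\ge3$. Indeed $|p_k|+|q_k|\ge|r|\ge1$, and $|p_k|+|q_k|\equiv p_k+q_k=r \pmod 2$. If $|r|\ge2$, then $|p_k|+|q_k|\ge2$, and the total is already at least $\tfrac34\cdot2+\sum_k2\lambda_k=\tfrac32+\tfrac12=2$. If $|r|=1$, then $|p_k|+|q_k|$ is odd and at least $2$, because both $p_k,q_k$ are nonzero; hence it is at least $3$. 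This gives $\M(R)\ge\tfrac34+3\sum_k\lambda_k=\tfrac34+\tfrac34=\tfrac32$, using $\sum_k\lambda_k=\tfrac14$. In both cases $\M(R)\ge\tfrac32$. The only step needing care is this case split on $|r|$, and the arithmetic above settles it.

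\textbf{Attainment.} For $R_*$ take $r=1$, $p_k=2$, $q_k=-1$. These satisfy \eqref{eq:cycle-coefficients}, and \eqref{eq:cycle-summability} holds because $\sum_k3\lambda_k<\infty$. So $R_*\in\mathscr Z(G)$ by the converse direction of \cref{thm:cycle-classification}. Its multiplicities are $1$ or $2$, so $\operatorname{mult}_\infty(R_*)=2$, and its mass is $\tfrac34+3\cdot\tfrac14=\tfrac32$. Both minima are therefore attained.
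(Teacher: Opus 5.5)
Your proof is correct and follows the paper's route. You read everything off from Theorem~\ref{thm:cycle-classification}, use the same case split $|r|=1$ versus $|r|\ge2$ for the mass bound, and your parity argument supplies the justification for $|p_k|+|q_k|\ge3$ when $|r|=1$, which the paper only asserts. The only real difference is that you get $\operatorname{mult}_\infty(R)\ge2$ directly from $p_k+q_k=r$, where a single trivalent vertex already suffices because $\partial R=0$, instead of citing Corollary~\ref{cor:infinite-odd}.

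One wording fix: your opening claim ``$|p_k|+|q_k|\ge3$ for each $k$'' is false when $|r|\ge2$. For example, $T$ has $r=2$ and $p_k=q_k=1$. Your case split only ever uses that bound when $|r|=1$, so the argument itself is unaffected.
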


\begin{proof}
The current $R_*$ corresponds in Theorem \ref{thm:cycle-classification} to
\[
       r=1,
       \qquad p_k=2,
       \qquad q_k=-1.
\]
Thus $R_*\in\mathscr Z(G)$, $\operatorname{mult}_{\infty}(R_*)=2$, and
\[
 \M(R_*)=\frac34+3\sum_{k=1}^{\infty}\lambda_k
 =\frac34+\frac34=\frac32.
\]
On the other hand, Corollary \ref{cor:infinite-odd} rules out a current in $\mathscr Z(G)$ with essential maximal multiplicity one. This proves the first minimum.

For the mass bound, let $R\in\mathscr Z(G)$ have the coefficients from Theorem \ref{thm:cycle-classification}. If $|r|=1$, then two nonzero integers with sum $r$ satisfy
\[
       |p_k|+|q_k|\ge3.
\]
Since $\sum_k\lambda_k=1/4$, \eqref{eq:cycle-mass} gives $\M(R)\ge3/2$. If $|r|\ge2$, the triangle inequality gives $|p_k|+|q_k|\ge|r|$, and therefore
\[
       \M(R)\ge
       \frac34|r|+\frac14|r|=|r|\ge2.
\]
Hence $3/2$ is the global minimum, attained by $R_*$.
\end{proof}

\begin{remark}\label{rem:symmetric-minimizing}
The symmetric cycle $T$ in \eqref{eq:T} corresponds to $r=2$ and $p_k=q_k=1$; it makes the balance law $2=1+1$ transparent. The cycle $R_*$ is asymmetric but mass minimizing. Both show that the sharp essential maximal multiplicity is two.
\end{remark}

\section{Negative answer to Basso's Question 25}\label{sec:negative}

We now apply the parity obstruction.

\begin{theorem}[Negative answer in dimension one]\label{thm:negative1}
For the metric space $G$ constructed in \cref{sec:graph}, there is no current $S\in\I_1(G)$ such that
\[
      \set(S)=G
\]
and the multiplicity of $S$ equals one $\Hh^1$-almost everywhere. Consequently, Basso's Question~25 has a negative answer.
\end{theorem}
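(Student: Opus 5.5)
We argue by contradiction and reduce everything to \cref{cor:infinite-odd}. Suppose $S\in\I_1(G)$ satisfies $\set(S)=G$ and $\theta_S=1$ $\Hh^1$-almost everywhere on $G$. The plan is to check that $S$ meets the hypotheses of \cref{thm:local-parity} at every vertex $a_k$ and $b_k$, $k\ge1$, and then invoke the finiteness of integral $0$-currents.

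The first step is to verify the graph-theoretic hypotheses. By construction each $a_k$ and $b_k$ has degree three: $a_k$ meets $t_{k-1},u_k,v_k$ and $b_k$ meets $u_k,v_k,t_k$. These vertices are isolated, because they are distinct from the single accumulation point $z$. In particular $a_k\to z$ and $b_k\to z$, but no $a_k$ or $b_k$ equals $z$. By \cref{lem:quotient-circle} every closed edge is isometrically embedded. So for $\rho$ smaller than half of $\min(\lambda_{k-1},\lambda_k)$ (and smaller than $1/8$ when $k=1$), the closed $\rho$-star around each such vertex is an ordinary star of three geodesic arms with disjoint interiors. This star contains no other vertex and stays away from $z$.

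The second step is to translate $\theta_S=1$ $\Hh^1$-a.e. into the hypothesis of \cref{thm:local-parity}. In the Ambrosio--Kirchheim representation, $\theta_S$ is the absolute value of the signed integer coefficient along the rectifiable carrier. Since $\Hh^1$-a.e. point of $G$ lies in an open edge, and each open edge is an isometric interval, the coefficient of $S$ on every arm of each star has absolute value one almost everywhere. We also want to note that $\set(S)=G$ is used only to pin down the multiplicity everywhere. In fact $\theta_S=1$ $\Hh^1$-a.e. already forces $\|S\|=\Hh^1\llcorner G$, so there is no hidden zero-multiplicity region to worry about.

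The third step applies \cref{thm:local-parity} to each $a_k$ and $b_k$. It gives boundary coefficients $m_{a_k},m_{b_k}\equiv 3\equiv1\pmod2$, so every one of the infinitely many distinct points $a_k,b_k$ is a nonzero atom of $\partial S$. Since $\partial S\in\I_0(G)$, this contradicts \cref{lem:I0}. The whole chain of reasoning is exactly \cref{cor:infinite-odd} applied to $G$. The negative answer to Basso's Question~25 then follows by combining this with \cref{prop:Tmass} and \cref{prop:setT}: $(G,d_G,T)$ is an integral current space, $G$ is compact and therefore complete, so $G$ is its own metric completion, and no unit-multiplicity full-support current exists.

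The only point needing care is the second step. We must identify the intrinsic multiplicity $\theta_S$ with the absolute value of the arm coefficient in the local-current restriction used in the proof of \cref{thm:local-parity}. I would handle this by citing the embedding-independence of the rectifiable representation (\cite[Theorems 9.5--9.6]{AK00}) together with the isometric arm charts from \cref{lem:quotient-circle}.
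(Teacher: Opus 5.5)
Your proposal is correct and follows essentially the same route as the paper: you apply \cref{thm:local-parity} at the trivalent vertices to get infinitely many nonzero integer atoms of $\partial S$, which contradicts \cref{lem:I0}; the paper uses only the $a_k$, and including the $b_k$ is harmless. Your extra checks --- the explicit star neighborhoods, the identification of $\theta_S$ with the absolute arm coefficient, and the observation that $G$ is compact and hence its own completion, so $(G,d_G,T)$ is a genuine counterexample --- make explicit what the paper leaves implicit.
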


\begin{proof}
Suppose such an $S$ exists. Every $a_k$ is an isolated vertex of degree three. Since the multiplicity of $S$ is one almost everywhere on $G$, it is in particular one almost everywhere on all three arms incident to $a_k$. By Theorem \ref{thm:local-parity}, the coefficient of $\partial S$ at $a_k$ is a nonzero odd integer. Hence every $a_k$ belongs to $\spt(\partial S)$.

The points $a_1,a_2,\ldots$ are distinct, so $\partial S$ has infinitely many nonzero integer atoms. This is impossible by Lemma \ref{lem:I0}. Equivalently,
\[
       \M(\partial S)\ge\sum_{k=1}^{\infty}1=\infty,
\]
contradicting $S\in\I_1(G)$.
\end{proof}

\begin{corollary}\label{cor:strong}
Question 25 remains false even after imposing the additional assumptions that the underlying one-dimensional integral current space is compact, geodesic, doubling, $1$-Ahlfors regular, of finite $\Hh^1$-measure, and carries a boundaryless integral current structure.
\end{corollary}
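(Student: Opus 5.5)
The plan is simply to assemble results already proved and to verify each added hypothesis of \cref{cor:strong} for the pair $(G,T)$ of \cref{sec:graph} and \cref{sec:current}. First, the underlying metric space: by \cref{prop:compact-length}, $G$ is compact, geodesic, and satisfies $\Hh^1(G)=5/4<\infty$. By \cref{prop:ahlfors}, the two-sided bound $r\le\Hh^1(B_G(x,r))\le4r$ holds for all $0<r\le\diam G$, so $G$ is $1$-Ahlfors regular and hence doubling.

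Second, the current structure. \cref{prop:Tmass} gives $T\in\I_1(G)$ with $\partial T=0$, and \cref{prop:setT} gives $\set(T)=G$. Since $G$ is compact, it is complete and equals its own metric completion. Thus $(G,d_G,T)$ is a one-dimensional integral current space in the sense of Sormani--Wenger, and its structure is boundaryless.

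Third, the negative conclusion. By \cref{thm:negative1}, no $S\in\I_1(G)$ satisfies both $\set(S)=G$ and $\theta_S=1$ $\Hh^1$-almost everywhere. Because the completion of $G$ is $G$ itself, this is exactly the failure of Question~25 for this space. The multiplicity is intrinsic by the remark following the Ambrosio--Kirchheim representation, so the choice of ambient space does not matter.

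The only point needing care, and a mild one, is this identification of the ambient completion with $G$ together with the intrinsic nature of the multiplicity; compactness settles it. No new estimates are required.
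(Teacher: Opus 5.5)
Your proposal is correct and matches the paper. The paper gives no separate proof of this corollary: it follows by combining Propositions~\ref{prop:compact-length}, \ref{prop:ahlfors}, \ref{prop:Tmass} and \ref{prop:setT} with Theorem~\ref{thm:negative1}, exactly as you assemble them, and your explicit remark that compactness makes $G$ its own metric completion (so the ambient space in Question~25 is $G$ itself) is the one point worth stating.
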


\begin{remark}\label{rem:kirchhoff}
The failure in Theorem \ref{thm:negative1} is caused neither by poor rectifiability nor by boundary in the original current structure: $G$ is geodesic and $1$-Ahlfors regular, and $T$ is a cycle. The obstruction is the integer balance law at the infinitely many odd-valence vertices. At each trivalent vertex $a_k$, the current $T$ satisfies
\[
      2=1+1.
\]
Unit multiplicity cannot satisfy the analogous balance because the sum of three signs $\pm1$ is never zero. Repetition of this local defect at infinitely many vertices forces the boundary mass to diverge.
\end{remark}

\section{Boundaryless counterexamples in every dimension}\label{sec:highdim}

We now show that odd branching survives a closed Euclidean thickening. The use of a torus, rather than a cube, ensures that the current furnishing the integral current space has no boundary.

Fix $n\ge2$, put $m=n-1$, and let
\[
       \mathbb T^m=(\mathbb R/\mathbb Z)^m
\]
be the flat $m$-torus. Set
\[
       X_n=G\times\mathbb T^m
\]
and equip this product with
\[
 d_n\bigl((x,z),(x',z')\bigr)
 =\sqrt{d_G(x,x')^2+d_{\mathbb T^m}(z,z')^2}.
\]

\begin{proposition}\label{prop:product-geometry}
The space $X_n$ is compact, geodesic, doubling, and $n$-Ahlfors regular. More precisely, for
\[
 \mu_n=(\Hh^1\llcorner G)
       \times(\Hh^m\llcorner\mathbb T^m)
\]
there exist constants $0<c_n\le C_n<\infty$ such that
\[
 c_n r^n\le
 \mu_n\bigl(B_{X_n}((x,z),r)\bigr)
 \le C_n r^n
\]
for all $(x,z)\in X_n$ and $0<r\le\diam X_n$. Moreover,
\[
       \mu_n\asymp_n\Hh^n\llcorner X_n.
\]
\end{proposition}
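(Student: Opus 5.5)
Compactness follows directly: $G$ is compact by \cref{prop:compact-length}, $\mathbb T^m$ is compact, and the $\ell^2$-product metric induces the product topology. For geodesicity, take $(x,z),(x',z')$, a constant-speed geodesic $\alpha:[0,1]\to G$ from $x$ to $x'$ and a constant-speed geodesic $\beta:[0,1]\to\mathbb T^m$ from $z$ to $z'$. Then $t\mapsto(\alpha(t),\beta(t))$ satisfies
\[
d_n\bigl((\alpha(s),\beta(s)),(\alpha(t),\beta(t))\bigr)=|s-t|\,d_n\bigl((x,z),(x',z')\bigr),
\]
so it is a geodesic. This is the standard fact that $\ell^2$-products of geodesic spaces are geodesic, as in \cite{BBI01}.

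For the Ahlfors bounds I would sandwich balls between products of balls:
\[
B_G(x,r/\sqrt2)\times B_{\mathbb T^m}(z,r/\sqrt2)\subset B_{X_n}((x,z),r)\subset B_G(x,r)\times B_{\mathbb T^m}(z,r).
\]
Applying Fubini to $\mu_n$, the upper bound comes from \cref{prop:ahlfors} ($\Hh^1(B_G(x,r))\le4r$) together with $\Hh^m(B_{\mathbb T^m}(z,r))\le\omega_m r^m$. The lower bound comes from $\Hh^1(B_G)\ge r/\sqrt2$ (valid since $r/\sqrt2\le\diam G$ in the relevant range) and the torus lower bound $\Hh^m(B_{\mathbb T^m}(z,\rho))\ge c\,\rho^m$ for $\rho\le\diam\mathbb T^m$. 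The range $r\le\diam X_n$ needs some bookkeeping: if $r/\sqrt2$ exceeds $\diam G$ or $\diam\mathbb T^m$, replace the corresponding factor by the whole space, whose measure is at least a fixed constant times $r^{\cdot}$ because $r$ is bounded above by $\diam X_n\le\sqrt{(\diam G)^2+(\diam\mathbb T^m)^2}$. Once $\mu_n$ is Ahlfors $n$-regular, $X_n$ is doubling, since a doubling measure exists; see \cite{Heinonen01}.

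The main obstacle is the comparison $\mu_n\asymp_n\Hh^n\llcorner X_n$, because Hausdorff measure on a product is not in general the product measure. My plan is to use the standard density theorem for Ahlfors regular measures (e.g.\ \cite{AmbrosioTilli04,Heinonen01}): if a finite Borel measure $\mu$ satisfies $c r^n\le\mu(B(p,r))\le Cr^n$ for all small balls, then $C^{-1}\mu\lesssim\Hh^n\lesssim c^{-1}\mu$, with constants depending only on $n$. This follows from covering arguments via upper and lower densities. A cleaner alternative I would also keep in mind uses \cref{lem:quotient-circle}. Each edge $e\subset G$ is isometric to an interval, so $e\times\mathbb T^m$ is locally isometric to a Euclidean prism, and on it $\Hh^n$ coincides with Lebesgue measure, which equals $\mu_n$. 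The union of the open edges has full measure for both measures: it has full $\mu_n$-measure because the vertex set is countable and $\Hh^1$-null, and full $\Hh^n$-measure because $\{\text{vertex}\}\times\mathbb T^m$ has $\Hh^n$-measure zero, being Lipschitz-equivalent to $\mathbb T^m$ of dimension $m<n$. A subtlety is that edges are only isometrically embedded as closed subsets and not open-isometric. However, Hausdorff measure of subsets of an isometrically embedded set is intrinsic, so the comparison in fact yields equality $\mu_n=\Hh^n$ up to the normalization of $\Hh^n$.
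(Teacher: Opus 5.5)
Your proposal is correct and follows essentially the paper's route: product geodesics from constant-speed geodesics, the sandwich $B_G(x,r/\sqrt2)\times B_{\mathbb T^m}(z,r/\sqrt2)\subset B_{X_n}((x,z),r)\subset B_G(x,r)\times B_{\mathbb T^m}(z,r)$ combined with \cref{prop:ahlfors} and a bounded-$r$ extension, and then the standard comparison of an Ahlfors-regular measure with $\Hh^n$. Your edge-times-torus chart argument matches the paper's use of the charts $e^\circ\times U$, and it in fact gives the slightly sharper identity $\mu_n=\Hh^n\llcorner X_n$ under the usual normalization.
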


\begin{proof}
Both factors are compact geodesic spaces, and constant-speed minimizing geodesics in the two factors combine to give a minimizing geodesic for the Euclidean product metric. Hence $X_n$ is compact and geodesic.

The flat torus is $m$-Ahlfors regular. Thus there are dimensional constants $a_m,b_m>0$ such that
\[
 a_m s^m\le
 \Hh^m(B_{\mathbb T^m}(z,s))
 \le b_m s^m
\]
for $0<s\le\diam\mathbb T^m$. Together with \cref{prop:ahlfors} and the inclusions
\[
\begin{aligned}
 B_G\!\left(x,\frac r{\sqrt2}\right)
 \times
 B_{\mathbb T^m}\!\left(z,\frac r{\sqrt2}\right)
 &\subset B_{X_n}((x,z),r),\\
 B_{X_n}((x,z),r)
 &\subset B_G(x,r)\times B_{\mathbb T^m}(z,r),
\end{aligned}
\]
this gives the asserted estimates for all sufficiently small $r$. Compactness and monotonicity of balls extend them, after changing the constants, to $0<r\le\diam X_n$.

The open sets $e^\circ\times U$, where $e$ is an edge and $U$ is a Euclidean coordinate cube in $\mathbb T^m$, give countably many bi-Lipschitz $n$-rectifiable charts covering $\mu_n$-almost all of $X_n$. The standard comparison of an Ahlfors regular measure with Hausdorff measure yields $\mu_n\asymp_n\Hh^n\llcorner X_n$; see \cite[Chapter~8]{Heinonen01} or \cite{AmbrosioTilli04}. Doubling follows from Ahlfors regularity.
\end{proof}

Let $Q=[0,1]^m$, let $q:Q\to\mathbb T^m$ be the quotient identifying opposite faces, and set
\[
       F=\operatorname{id}_G\times q:G\times Q\longrightarrow X_n.
\]
Iterating Wenger's interval-product construction and applying the canonical factor-flip isometries \cite[Section 3.1, Theorem 3.2]{Wenger07}, define
\begin{equation}\label{eq:torus-product-current}
       T_n=F_\#\bigl(T\times\llbracket Q\rrbracket\bigr).
\end{equation}
The quotient formulation is useful because it gives the boundary cancellation directly from the interval-product formula.

\begin{proposition}\label{prop:product-current}
The current $T_n$ defined in \eqref{eq:torus-product-current} belongs to $\I_n(X_n)$ and satisfies
\[
       \partial T_n=0,
       \qquad
       \set(T_n)=X_n.
\]
\end{proposition}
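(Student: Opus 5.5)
The proof has three parts, handled in order: integrality, vanishing boundary, and full characteristic set. Integrality and the boundary computation both rest on Wenger's product theory \cite[Section 3.1, Theorem 3.2]{Wenger07}. Since $T\in\I_1(G)$ and $\llbracket Q\rrbracket\in\I_m(\mathbb R^m)$, iterating the interval product gives $T\times\llbracket Q\rrbracket\in\I_n(G\times Q)$. It satisfies
\[
 \partial\bigl(T\times\llbracket Q\rrbracket\bigr)
 =\partial T\times\llbracket Q\rrbracket+(-1)^{1}\,T\times\partial\llbracket Q\rrbracket
 =-\,T\times\partial\llbracket Q\rrbracket,
\]
where the first term vanishes by \cref{prop:Tmass}. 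The map $F$ is $1$-Lipschitz, so $T_n=F_\#(T\times\llbracket Q\rrbracket)$ is integral. Its boundary is $-F_\#(T\times\partial\llbracket Q\rrbracket)$.

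For the boundary, write $\partial\llbracket Q\rrbracket=\sum_{i=1}^m(-1)^{i-1}\bigl(\llbracket Q_i^1\rrbracket-\llbracket Q_i^0\rrbracket\bigr)$, with $Q_i^0,Q_i^1$ the opposite faces $\{x_i=0\}$ and $\{x_i=1\}$. The quotient $q$ identifies $Q_i^0$ with $Q_i^1$ by the translation $x\mapsto x+e_i$. This translation carries the standard oriented current on one face to that on the other with the same orientation, since both faces are oriented by the remaining coordinates in the same order. Hence $q_\#\llbracket Q_i^1\rrbracket=q_\#\llbracket Q_i^0\rrbracket$. By naturality of products under push-forward by product maps, $(\mathrm{id}\times q)_\#(T\times A)=T\times q_\#A$. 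The face contributions therefore cancel in pairs, and $\partial T_n=0$. The one point needing care is that the product is well behaved under $\mathrm{id}\times q$ even though $q$ is not injective on the faces. To handle it, I would approximate $T$ in mass by the finite truncations $T_N$ of \cref{prop:Tmass}. For these the products are finite sums of Euclidean prisms $e\times Q$ embedded isometrically, by \cref{lem:quotient-circle}, and the cancellation is the classical Euclidean one. I would then pass to the limit using the mass continuity $\M(T_N\times\llbracket Q\rrbracket - T\times\llbracket Q\rrbracket)\le \M(T-T_N)$, which follows from the mass estimate for products.

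For $\set(T_n)=X_n$, I would identify $\|T_n\|$ chart by chart. On each open chart $e^\circ\times U$, with $U$ a small cube on which $q$ is injective, $T_n$ equals $\theta_e\llbracket e^\circ\times U\rrbracket$, where $\theta_e\in\{1,2\}$ is the coefficient of $T$ on $e$ and the chart is Euclidean. Hence $\|T_n\|\ge\mu_n$. The Ahlfors lower bound of \cref{prop:product-geometry} then gives
\[
\|T_n\|(B((x,z),r))\ge c_nr^n
\]
for every point and every small $r$. So the lower $n$-density is positive everywhere, and $\set(T_n)=X_n$.

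I expect the main obstacle to be the boundary cancellation across the glued faces, in particular checking that orientations match so the pairs cancel rather than add. The truncation argument reduces this to the classical computation for $[0,1]^m\to\mathbb T^m$, which I would carry out explicitly for the finite prisms.
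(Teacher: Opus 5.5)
Your proposal is correct and follows the paper's proof. Wenger's iterated interval product together with $\partial T=0$ reduces $\partial(T\times\llbracket Q\rrbracket)$ to the face terms, and these cancel in opposite pairs under $F=\mathrm{id}_G\times q$; the chartwise identification $T_n=\theta_e\llbracket e^\circ\times U\rrbracket$ on charts avoiding $\Sigma$ then gives $\|T_n\|\ge\mu_n$, so $\set(T_n)=X_n$ follows from the Ahlfors lower bound. Your truncation detour, which the paper does not use, is harmless but unnecessary: the cancellation follows directly from $F\circ(\mathrm{id}_G\times\tau_i)=F$ on $G\times Q_i^0$, with $\tau_i(x)=x+e_i$, together with $(\mathrm{id}_G\times\tau_i)_\#(T\times\llbracket Q_i^0\rrbracket)=T\times\llbracket Q_i^1\rrbracket$, which also spares you from making sense of a product $T\times q_\#A$ with a current in $\mathbb T^m$.
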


\begin{proof}
The iterated interval product $T\times\llbracket Q\rrbracket$ is integral and has finite mass and finite boundary mass. Since $\partial T=0$, Wenger's boundary formula
\[
 \partial([0,1]\times R)
 =[1]\times R-[0]\times R-[0,1]\times\partial R
\]
shows that its boundary is the alternating sum of the $2m$ currents carried by the faces of $G\times Q$. Under $F$, each pair of opposite faces has the same image in $G\times\mathbb T^m$ and opposite induced orientations. The pairs therefore cancel, and naturality of the boundary under push-forward gives
\[
       \partial T_n
       =F_\#\partial\bigl(T\times\llbracket Q\rrbracket\bigr)=0.
\]
Thus $T_n\in\I_n(X_n)$.

Let $\Sigma\subset\mathbb T^m$ be the image under $q$ of the boundary faces of $Q$. It is $\Hh^m$-null. On every product chart
\[
       e^\circ\times U,
       \qquad U\Subset\mathbb T^m\setminus\Sigma,
\]
the map $F$ is locally an isometry and $T_n$ is the ordinary integration current with coefficient equal to that of $T$ on $e$, namely one or two. These charts cover $\mu_n$-almost all of $X_n$. Hence, for a dimensional constant $a_n>0$,
\[
       \|T_n\|\ge a_n\mu_n.
\]
By Proposition \ref{prop:product-geometry}, every sufficiently small ball centered at any point of $X_n$ has $\mu_n$-measure at least $c_nr^n$. It follows that
\[
 \|T_n\|(B_{X_n}((x,z),r))\ge a_nc_nr^n,
\]
so every point has positive lower $n$-density for $\|T_n\|$. Therefore $\set(T_n)=X_n$.
\end{proof}

We next choose a global slicing map with regular finite fibers. Define
\[
 h:\mathbb R/\mathbb Z\longrightarrow[-1,1],
 \qquad h([t])=\cos(2\pi t),
\]
and set
\[
 \psi:\mathbb T^m\longrightarrow[-1,1]^m,
 \qquad
 \psi(z_1,\ldots,z_m)=(h(z_1),\ldots,h(z_m)).
\]
Finally, let
\begin{equation}\label{eq:slicing-map}
 \Pi:X_n\longrightarrow\mathbb R^m,
 \qquad
 \Pi(x,z)=\psi(z).
\end{equation}
For every $y\in(-1,1)^m$, the set
\[
       Z_y:=\psi^{-1}(y)
\]
consists of exactly $2^m$ points, and $\psi$ has full rank at each point of $Z_y$.

\begin{lemma}[Unit multiplicity on the fiber components]\label{lem:slice-unit}
Suppose $S\in\I_n(X_n)$ satisfies
\[
       \set(S)=X_n,
       \qquad
       \theta_S=1\quad\Hh^n\text{-a.e. on }X_n.
\]
Then there is a Borel set $Y_S\subset(-1,1)^m$ of full
$\mathcal L^m$-measure such that, for every $y\in Y_S$, the slice admits a
finite decomposition
\[
       \langle S,\Pi,y\rangle
       =\sum_{z\in Z_y}R_{y,z},
\]
where each $R_{y,z}\in\I_1(X_n)$ is concentrated on
$G\times\{z\}$. After identifying this closed isometric fiber with $G$,
the current $R_{y,z}$ has absolute multiplicity one $\Hh^1$-almost
everywhere on the interior of every edge.
\end{lemma}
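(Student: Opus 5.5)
The plan is to apply the Ambrosio--Kirchheim slicing theory to $S$ and the Lipschitz map $\Pi$, and then to use the product chart structure to read off multiplicities. First, by \cite[Theorems 5.6--5.7]{AK00}, for $\mathcal L^m$-a.e.\ $y\in\mathbb R^m$ the slice $\langle S,\Pi,y\rangle$ is an integral $1$-current with $\partial\langle S,\Pi,y\rangle=(-1)^m\langle\partial S,\Pi,y\rangle$. It is concentrated on $\Pi^{-1}(y)=G\times Z_y$, and $\int\M(\langle S,\Pi,y\rangle)\,dy\le C\M(S)$. Since $Z_y$ consists of $2^m$ points, the fibers $G\times\{z\}$, $z\in Z_y$, are pairwise disjoint compact sets at positive mutual distance. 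Restricting the slice to each of them (by locality, or by multiplying with Lipschitz cutoffs that are locally constant on $\Pi^{-1}(y)$) gives the decomposition $\sum_z R_{y,z}$. Each $R_{y,z}$ is concentrated on a closed isometric copy of $G$. It is integral, because restricting to a clopen part of the support commutes with the boundary: the boundary of the slice is an integral $0$-current carried by $G\times Z_y$, and it splits accordingly. By the subspace identification recalled in the preliminaries, each $R_{y,z}$ can then be viewed as an element of $\I_1(G)$.

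The substantive step is the multiplicity statement, for which I would use the precise slice representation \cite[Theorem 9.7]{AK00}. For integer rectifiable $S=\llbracket \mathcal S,\theta_S,\tau\rangle$ and a.e.\ $y$, the slice is carried by $\mathcal S\cap\Pi^{-1}(y)$ with multiplicity $\theta_S$ restricted there, at points where the tangential coarea factor $J^{\mathcal S}\Pi$ is positive. I would verify this on the product charts $e^\circ\times U$ with $U\Subset\mathbb T^m\setminus(\Sigma\cup\{\text{critical set of }\psi\})$. On such a chart $X_n$ is locally isometric to an open subset of $\mathbb R\times\mathbb R^m$, and $\Pi$ is a submersion there onto an open set in $(-1,1)^m$ with full-rank Jacobian in the $z$-variables. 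Hence $J^{\mathcal S}\Pi>0$ on $\mathcal S\cap(e^\circ\times U)$, because $\set(S)=X_n$ forces the approximate tangent to be the whole product tangent $\Hh^n$-a.e. Next I would apply the coarea formula to the $\Hh^n$-null set $N=\{\theta_S\neq1\}$: $\int\Hh^1(N\cap\Pi^{-1}(y))\,dy\le C\Hh^n(N)=0$. So for a.e.\ $y$ the slice multiplicity equals $1$ at $\Hh^1$-a.e.\ point of $\Pi^{-1}(y)$ lying in the charts. Similarly, $\set(S)=X_n$ together with $\theta_S\ge1$ a.e.\ ensures that the slice carrier is, for a.e.\ $y$, $\Hh^1$-almost all of $\bigcup_z e^\circ\times\{z\}$. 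This prevents the multiplicity from vanishing on a positive-measure subset of an edge.

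The remaining exceptional pieces must be discarded. These are the points $y$ with $Z_y$ meeting $\Sigma$ or the critical set of $\psi$, which form an $\mathcal L^m$-null set since $y\in(-1,1)^m$ avoids critical values. They also include the complement of the open edges times $\mathbb T^m$, which is countable times $\mathbb T^m$ and hence carries no $\Hh^1$-mass on any fiber. Intersecting the countably many full-measure sets of good $y$ (one for each chart) with the set where slicing is valid gives $Y_S$, which I would take Borel after shrinking to a Borel subset of full measure. I expect the main obstacle to be the middle step: correctly invoking the precise form of \cite[Theorem 9.7]{AK00} so that the slice multiplicity equals $\theta_S$ with no Jacobian weight and with the right orientation. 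The intended route is to localize to charts where $\Pi$ is literally a coordinate projection in Euclidean space, so that the statement reduces to the classical Federer slicing of integer rectifiable currents.
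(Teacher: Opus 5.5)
Your proposal is correct and follows essentially the paper's route. Ambrosio--Kirchheim slicing (Theorems 5.6--5.7) gives integrality and the carrier $\Pi^{-1}(y)=G\times Z_y$; the precise representation of Theorem 9.7, together with coarea applied to the $\Hh^n$-null set $\{\theta_S\neq1\}$, gives unit multiplicity on almost every fiber; and Lipschitz cutoffs, locally constant near $G\times Z_y$, split the slice into integral pieces via $\partial(R\llcorner\eta)=(\partial R)\llcorner\eta-R\llcorner d\eta$. The only difference is how you make Theorem 9.7 available: the paper embeds $X_n$ isometrically into $\ell^\infty=(\ell^1)^*$ and extends $\Pi$ by McShane so the theorem applies verbatim in a dual space with separable predual, whereas your localization to Euclidean product charts with classical Federer slicing also works but additionally requires checking that metric slicing commutes with restriction to Borel sets and with isometric push-forward.
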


\begin{proof}
By Ambrosio--Kirchheim's slicing and boundary theorems, there is a conull
set $Y_0\subset\mathbb R^m$ such that
\[
       R_y:=\langle S,\Pi,y\rangle\in\I_1(X_n)
       \qquad (y\in Y_0),
\]
and
\begin{equation}\label{eq:slice-carrier}
       \spt R_y\subset \Pi^{-1}(y).
\end{equation}
See \cite[Theorems~5.6--5.7]{AK00}. We next make the multiplicity statement
precise.

Since $X_n$ is compact and separable, the countable
Fr\'echet--Kuratowski map gives an isometric embedding
\[
       \iota:X_n\longrightarrow \ell^\infty=(\ell^1)^*;
\]
see \cite{BBI01}. The image $\iota(X_n)$ is compact and hence closed.
Extend each coordinate of $\Pi\circ\iota^{-1}$ from $\iota(X_n)$ to
$\ell^\infty$ by McShane's theorem \cite{McShane34}. Applying
Ambrosio--Kirchheim's precise slice representation in a dual space with
separable predual to $\iota_\#S$, and then identifying $\iota(X_n)$ with
$X_n$, gives, for almost every $y\in\mathbb R^m$, an orientation $\tau_y$
such that
\begin{equation}\label{eq:precise-slice}
 R_y
 =\llbracket
      \set(S)\cap\Pi^{-1}(y),\theta_S,\tau_y
   \rrbracket.
\end{equation}
Here the right-hand side is the standard bracket notation for the current
carried by the indicated rectifiable set, with integer multiplicity
$\theta_S$ and orientation $\tau_y$.
This is exactly \cite[Theorem~9.7]{AK00}; in particular, the integer
multiplicity appearing in a slice is the restriction of the multiplicity
of $S$, not a new weight involving the coarea Jacobian.

It remains to check carefully that the exceptional $\Hh^n$-null set on
which $\theta_S$ may differ from one does not leave positive-length pieces
in the fibers under consideration. Put
\[
       N_S=\{w\in X_n:\theta_S(w)\ne1\};
       \qquad \Hh^n(N_S)=0.
\]
For every open edge $e^\circ$ and every one of the $2^m$ inverse branches
$z_\sigma:(-1,1)^m\to\mathbb T^m$ of $\psi$, apply the ordinary coarea
formula \cite[3.2.22]{Federer69} in the Euclidean product charts
\[
       e^\circ\times z_\sigma(K),
       \qquad K\Subset(-1,1)^m.
\]
These are legitimate rectifiable charts because every edge is intrinsically
isometric by \cref{lem:quotient-circle}, and the differential of $\psi$ has
rank $m$ on each branch. Since $N_S$ is $\Hh^n$-null, coarea gives
\begin{equation}\label{eq:null-fiber-sections}
 \Hh^1\bigl(
    N_S\cap(e^\circ\times\{z_\sigma(y)\})
 \bigr)=0
 \quad\text{for $\mathcal L^m$-a.e. }y\in K.
\end{equation}
Exhaust $(-1,1)^m$ by countably many compact boxes $K$, and take the union
of the exceptional sets in \eqref{eq:null-fiber-sections} over the
countably many edges and the finitely many branches. We obtain a conull set
$Y_1\subset(-1,1)^m$ on which \eqref{eq:null-fiber-sections} holds
simultaneously for every edge and every branch.

Let $Y_S$ be the intersection of $Y_0\cap Y_1$ with the conull set on which
\eqref{eq:precise-slice} holds, and fix $y\in Y_S$. Since
$\set(S)=X_n$, equations \eqref{eq:precise-slice} and
\eqref{eq:null-fiber-sections} imply that, on every
$e^\circ\times\{z\}$ with $z\in Z_y$, the slice $R_y$ has absolute
multiplicity one $\Hh^1$-almost everywhere.

We finally separate the finitely many fiber components without losing
integrality. Put $D_z=G\times\{z\}$ and
\[
       \delta_y=\frac15
       \min_{\substack{z,z'\in Z_y\\ z\ne z'}}
       d_{\mathbb T^m}(z,z')>0.
\]
Choose a Lipschitz function $\chi:[0,\infty)\to[0,1]$ which is one on
$[0,1]$ and zero on $[2,\infty)$, and define
\[
       \eta_z(w)=\chi\!\left(
          \frac{d_{X_n}(w,D_z)}{\delta_y}
       \right).
\]
Then $\eta_z$ is identically one in a neighborhood of $D_z$ and identically
zero in neighborhoods of all $D_{z'}$, $z'\ne z$. By
\eqref{eq:slice-carrier}, $\eta_z$ is therefore locally constant near
$\spt R_y$. Set
\[
       R_{y,z}=R_y\llcorner\eta_z.
\]
On the rectifiable carrier of $R_y$ the multiplier $\eta_z$ takes only the
integer values zero and one, so $R_{y,z}$ is integer rectifiable and is
concentrated on $D_z$. Moreover,
$\spt(\partial R_y)\subset\spt R_y$ and locality gives
$R_y\llcorner d\eta_z=0$. The restriction boundary formula therefore yields
\begin{equation}\label{eq:component-boundary}
 \partial R_{y,z}
   =(\partial R_y)\llcorner\eta_z
     -R_y\llcorner d\eta_z
   =(\partial R_y)\llcorner\eta_z.
\end{equation}
The last current is an integer rectifiable $0$-current because $\eta_z$ is
again zero or one on $\spt(\partial R_y)$. Hence
$R_{y,z}\in\I_1(X_n)$. On $\spt R_y$ the finitely many functions
$\eta_z$, $z\in Z_y$, form a partition into the fiber components, and thus
\[
       R_y=\sum_{z\in Z_y}R_{y,z}.
\]
The edgewise unit-multiplicity assertion follows from
\eqref{eq:precise-slice}--\eqref{eq:null-fiber-sections}. Finally, the
closed-subspace identification recalled in \cref{sec:prelim} identifies
each component with an integral current on $G$.
\end{proof}

\begin{remark}[The precise role of slicing]\label{rem:slicing}
Theorem 5.7 of \cite{AK00} supplies integrality of the slice, whereas
Theorem 9.7 supplies the exact carrier--multiplicity representation
\eqref{eq:precise-slice}. Coarea is used only to pass the
$\Hh^n$-exceptional set for $\theta_S$ to an $\Hh^1$-exceptional set on
almost every fiber. The positive separation of the finitely many level-set
components and \eqref{eq:component-boundary} then isolate one copy of $G$
as an integral current. No assertion that
$\set(R_{y,z})=G\times\{z\}$ is needed for the parity argument.
\end{remark}

\begin{theorem}[Boundaryless negative examples in every dimension]\label{thm:negative-n}
For every $n\ge1$ there exist a compact, geodesic, doubling, $n$-Ahlfors regular metric space $X_n$ and an integral current $T_n\in\I_n(X_n)$ such that
\[
       \partial T_n=0,
       \qquad
       \set(T_n)=X_n,
\]
but there is no $S\in\I_n(X_n)$ satisfying
\[
       \set(S)=X_n,
       \qquad
       \theta_S=1\quad\Hh^n\text{-a.e. on }X_n.
\]
\end{theorem}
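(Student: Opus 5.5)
The case $n=1$ is already settled: take $X_1=G$ and $T_1=T$. Then Propositions \ref{prop:compact-length} and \ref{prop:ahlfors} give compactness, geodesicity, doubling and $1$-Ahlfors regularity. Propositions \ref{prop:Tmass} and \ref{prop:setT} give $\partial T=0$ and $\set(T)=G$. Theorem \ref{thm:negative1} gives the nonexistence of a unit-multiplicity full-support current.

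For $n\ge2$, put $m=n-1$ and take $X_n=G\times\mathbb T^m$ and $T_n$ as in \eqref{eq:torus-product-current}. The geometric assertions are Proposition \ref{prop:product-geometry}, and the identities $T_n\in\I_n(X_n)$, $\partial T_n=0$, $\set(T_n)=X_n$ are Proposition \ref{prop:product-current}. It remains to prove nonexistence.

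Suppose, towards a contradiction, that $S\in\I_n(X_n)$ has $\set(S)=X_n$ and $\theta_S=1$ $\Hh^n$-a.e. Apply Lemma \ref{lem:slice-unit} with the slicing map $\Pi$ of \eqref{eq:slicing-map}. Since $Y_S$ has full $\mathcal L^m$-measure in $(-1,1)^m$, it is nonempty, so we may fix $y\in Y_S$ and a point $z\in Z_y$. The lemma gives $R_{y,z}\in\I_1(X_n)$, concentrated on the closed isometric fiber $D_z=G\times\{z\}$. By the closed-subspace identification recalled in \cref{sec:prelim}, $R_{y,z}$ corresponds to a current $S'\in\I_1(G)$. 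By the lemma, $S'$ has absolute multiplicity one $\Hh^1$-a.e.\ on the interior of every edge of $G$.

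Now apply Corollary \ref{cor:infinite-odd}, equivalently Theorem \ref{thm:local-parity} at each trivalent vertex $a_k$. Each $a_k$ is an isolated vertex of degree three, and $S'$ has unit absolute multiplicity on all three incident arms. Hence every $a_k$ carries a nonzero odd atom of $\partial S'$. This gives infinitely many nonzero integer atoms, contradicting Lemma \ref{lem:I0}.

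Remark \ref{rem:slicing} points out that full support of the slice is not needed, and indeed Theorem \ref{thm:local-parity} only uses unit multiplicity on the incident arms. One should check that it does not presuppose nonzero multiplicity in some other way. Its proof derives the sign $\varepsilon_i\in\{\pm1\}$ from the unit-multiplicity hypothesis together with Lemma \ref{lem:interval}. That derivation needs $\partial S'=0$ on the open arms, which holds after shrinking the star, because $\spt\partial S'$ is finite. So the argument goes through.

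The main obstacle was the slicing step: producing an honest integral current on a single copy of $G$ with unit edge multiplicity. That work is already done in Lemma \ref{lem:slice-unit}, so the remaining proof is bookkeeping. The one point to state carefully is that $Y_S\neq\emptyset$, and this follows because a full-measure subset of $(-1,1)^m$ cannot be empty.
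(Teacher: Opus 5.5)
Your proposal is correct and follows essentially the same route as the paper. The case $n=1$ comes from $(G,T)$ and Theorem~\ref{thm:negative1}; for $n\ge2$ you use Propositions~\ref{prop:product-geometry} and~\ref{prop:product-current}, then Lemma~\ref{lem:slice-unit} to extract a unit-multiplicity integral $1$-current on a single fiber $G\times\{z\}$, which Corollary~\ref{cor:infinite-odd} rules out. Your extra checks (that $Y_S\neq\emptyset$, and that Theorem~\ref{thm:local-parity} needs only unit multiplicity on the incident arms rather than full support of the slice) just make explicit points the paper leaves implicit.
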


\begin{proof}
For $n=1$, take $(G,T)$ and apply Theorem \ref{thm:negative1}. Let $n\ge2$. The geometric conclusions follow from Proposition \ref{prop:product-geometry}, while Proposition \ref{prop:product-current} gives a boundaryless integral current structure on $X_n$.

Suppose that a current $S$ with the asserted unit multiplicity exists. Choose $y\in(-1,1)^m$ for which Lemma \ref{lem:slice-unit} holds and choose any $z\in Z_y$. After identifying $G\times\{z\}$ with $G$, the component $R_{y,z}$ is an integral $1$-current having unit multiplicity on the interior of every edge. This contradicts Corollary \ref{cor:infinite-odd}, since the vertices $a_k\times\{z\}$ are all trivalent in the fiber graph.
\end{proof}

\section*{Acknowledgements}
This work was supported by Guangdong Basic and Applied Basic Research Foundation
(No.~2022A1515110967 and No.~2023A1515011809). The author acknowledges the use of AI-assisted tools in manuscript preparation and assumes full responsibility for all mathematical statements and proofs.

\end{document}